\documentclass[12pt]{amsart}
\usepackage{amsmath, amsfonts, amssymb, latexsym, amsthm, amscd, mathrsfs, stmaryrd} 
\usepackage[all]{xy}
\usepackage[makeroom]{cancel}
\usepackage{yhmath}
\usepackage{hyperref}

\setlength{\hoffset}{0pt}
\setlength{\voffset}{0pt}
\setlength{\topmargin}{0pt}
\setlength{\oddsidemargin}{0in}
\setlength{\evensidemargin}{0in}
\setlength{\textheight}{8.75in}
\setlength{\textwidth}{6.5in}
\pagestyle{headings}

\theoremstyle{definition}
\newtheorem{Question}[subsubsection]{Question}
\newtheorem{rem}[subsubsection]{Remark}
\theoremstyle{plain}
\newtheorem{prop}[subsubsection]{Proposition}
\newtheorem{thm}[subsubsection]{Theorem}
\newtheorem{thrm}{Theorem}

\newtheorem{cor}[subsubsection]{Corollary}

\newcommand{\mbf}{\mathbf}
\newcommand{\mbb}{\mathbb}
\newcommand{\mrm}{\mathrm}

\usepackage{color}
\newcommand{\nc}{\newcommand}
\nc{\redtext}[1]{\textcolor{red}{#1}}
\nc{\bluetext}[1]{\textcolor{blue}{#1}}
\nc{\greentext}[1]{\textcolor{green}{#1}}
\nc{\yl}[1]{\redtext{ #1}}
\nc{\zb}[1]{\redtext{From zb: #1}}

\newcommand{\A}{\mathcal A}

\newcommand{\GL}{\mrm{GL}}

\renewcommand{\H}{\mbf H}
\newcommand{\U}{\mbf U}
\newcommand{\V}{\mbf V}

\newcommand{\mult}{\mrm{mult}}
\newcommand{\can}{\mrm{can}}
\newcommand{\aff}{\mrm{aff}}
\newcommand{\ve}{\varepsilon}

\newcommand{\T}{\mbf T}

\setcounter{tocdepth}{1}

\title[Hecke algebras and edge contractions]{Hecke algebras and edge contractions}

\author{Yiqiang Li}
\address{Department of Mathematics\\ University at Buffalo\\  State University of New York  \\Buffalo, NY 14260}
\email{yiqiang@buffalo.edu}

\date{\today}
\keywords{}
\subjclass{}

\begin{document}

\begin{abstract}
We establish an embedding from  the Hecke algebra associated with the edge contraction of a Coxeter system along an edge 
to  the Hecke algebra associated with the original Coxeter system. 
\end{abstract}

\maketitle

\section*{Introduction}
 
 \subsection{Edge contraction and representation theory}
Edge contraction is a fundamental operation on graphs, where two vertices are merged along an edge, producing a new graph. Investigating the relationship between representation-theoretic objects attached to a graph and those to the contracted graph is of considerable interest. Such investigations, besides their natural appeal, often have crucial applications. The behavior of various algebraic structures under edge contractions has been studied extensively. In \cite{Li23}, the behaviors of Weyl groups, Hall algebras, quantum groups, and Chevalley groups under edge contractions were examined, revealing a sub-quotient phenomenon, where the object attached to the contracted graph is a sub-quotient of the object attached to the original graph. Similar observations were made regarding the critical cohomological Hall algebras in \cite{LR24}. For Yangians of type A, Ueda provided further insights in a recent work \cite{U23}. It remains a natural question whether such a sub-quotient phenomenon holds for other representation-theoretic objects.

In this paper, we focus on the behavior of Iwahori-Hecke algebras under edge contractions. This study is a natural extension of the investigation in \cite{Li23}, as these algebras are quantum deformations of Weyl groups. Moreover, Hecke algebras can be associated with Coxeter systems, a notion more general than a graph (and a root datum). While the notion of edge contraction can be generalized to Coxeter matrices/systems, we demonstrate that the Coxeter group of a contracted Coxeter matrix is a subgroup of the Coxeter group of the original Coxeter matrix, establishing an explicit embedding. Furthermore, we show that this embedding extends naturally to Hecke algebras, showcasing the sub-quotient phenomenon.

The embedding of Hecke algebras can be described as follows. 
Let $(W_{S/e}, S/e)$ be the edge contraction of a Coxeter system $(W, S)$ along the edge $e=\{s_+, s_-\}$. 
Thus $S/e=S- \{ s_{+}, s_-\} + \{s_0\}$. 
Let $H_{S/e}$ and
$H_S$ be the respective Iwahori-Hecke algebras, with standard generators $T'_s$ and $T_s$, respectively.
 
\begin{thrm}[Theorem~\ref{H-inj}, Proposition~\ref{phi-H-ext}]
There is an embedding of algebras 
\[
H_{S/e} \hookrightarrow H_{S}, T'_s\mapsto 
\begin{cases}
T_s & \mbox{if}\  s \neq s_0,\\
T_{s_+} T_{s_-} T^{-1}_{s_+}  &\mbox{if}\ s= s_0.
\end{cases}
\]
\end{thrm}

\subsection{Proof techniques and linear trees}
The injective statement for Coxeter groups is proven via a calculus of the associated reflection representations, while on the Hecke algebra level, it is established through a specialization argument built upon the embeddings on Coxeter groups.

When edge contraction is performed along a linear tree, we demonstrate that these embeddings are conjugations of a naive embedding, akin to the embeddings of quantum groups discussed in \cite{Li23}.

\subsection{Extensions to affine structures}
The embeddings on Coxeter groups extend naturally to embeddings on affine Weyl groups. On the affine Hecke algebra level, we provide explicit embeddings under both the Iwahori-Matsumoto presentation and the Bernstein-Lusztig presentation.
While the embeddings of extended affine Hecke algebras are established in type A, for other types, further investigation is required.

\subsection{Compatibility and comparisons}
We establish that the embeddings of Hecke algebras are compatible with the embeddings of quantum $\mathfrak{sl}_n$ under Schur-Weyl duality, as explored in \cite{Li22, Li23}. On the Yangian level, comparisons are made in \cite{U24}.

\subsection{Geometric insights and future directions}
Given the geometric realization of affine Hecke algebras, we anticipate that the embeddings established in this paper will aid in analyzing edge contractions in related geometric settings. Exploring connections between these embeddings and results in \cite{M15, M18} on Khovanov-Lauda-Rouquier algebras could provide further insights.

\subsection{Relation to existing work}

M\"{u}hlherr studied in~\cite{Mu93} what type of Coxeter groups can be embedded in a fixed Coxeter group. 
Some results in {\it loc. cit.} were rediscovered  recently by Elias and Heng  as a Coxeter embedding for a Coxeter partition in \cite{EH24}. While the embeddings presented in this paper share similarities with those, they are not a special case thereof. Investigating a unified theory encompassing both types of embeddings promises to be an intriguing direction for future research.

 \subsection{}
 The layout of the paper is as follows. 
 In Section~\ref{Coxeter}, we recall the Coxeter system. We define the edge contraction operation on a Coxeter system. 
 We establish the embedding of the Coxeter groups and its affine variant.
 In Section~\ref{Hecke}, we establish the embeddings of Hecke algebras under an edge contraction.
 We relate the embedding with a naive embedding when the edge contraction is operated along a linear tree. 
 Section~\ref{HA} is devoted to an embedding of extended affine Hecke algebras of type A. 
 Section~\ref{AffH} is devoted to the embeddings of affine Hecke algebra under the Bernstein-Lusztig presentation.
 Section~\ref{Schur} is a study of the compatibility of the embeddings of Hecke algebras with the embeddings of quantum  $\mathfrak{sl}_n$. 
 The effect of edge contraction on a labelled edge is studied in the Appendix. 
  
\subsection{Acknowledgements}
It is a pleasure to thank Xuhua He for an interesting conversation. 
He asked   whether there is an embedding of Hecke algebras under edge contraction along a labelled edge, which indeed has an affirmative answer given
in the Appendix.  

I am  grateful to the anonymous referee for his/her suggestions that strengthen results in the paper. 
The referee made an observation that the image of the embedding on the Coxeter group level is  a reflection subgroup and hence Proposition~\ref{W-edge} can be deduced from Dyer's work 
~\cite{Dy87} and Deodhar's work~\cite{De89}. He/She also    pointed out the  relationship between the works~\cite{Mu93} and~\cite{EH24}.  
 The results in the Appendix were obtained under his/her suggestion and a question by Xuhua He. 
\tableofcontents

\section{Coxeter systems and edge contractions}
\label{Coxeter}

Let $\mbb Z$ be the set of integers and $\mbb Z_{\geq 1}$ be the set of natural numbers. 

\subsection{Coxeter system}
\label{Coxeter-1}
Let $S$ be a nonempty set. A Coxeter matrix $M=(m_{s, s'})_{s, s'\in S}$ is a symmetric matrix with entries in $\mbb Z_{\geq 1} \sqcup \{\infty\}$
such that
\[
m_{s, s'} =1 \ \mbox{iff} \ s=s'.
\]
Let $W_S\equiv W_{S, M}$ be the Coxeter group generated by elements in $S$ subject to the defining relation
$(ss')^{m_{s,s'}} = 1$, for all  $m_{s, s'} \neq \infty$.
The pair $(W_S, S)$ is a Coxeter system. 

\subsection{A linear representation}
\label{linear}
Given the pair $(S, M)$, we 
fix a matrix $K=(k_{s, s'})_{s, s' \in S}$ satisfying the following conditions.
\begin{align}
\label{K}
\begin{cases}
k_{s,s}=-2, &\forall s\in S;\\
k_{s, s'}=0, & \forall m_{s,s'}=2;\\
k_{s, s'}> 0, & \forall m_{s, s'} \geq 3; \\
k_{s, s'} k_{s', s} = 4 \cos^2\frac{\pi}{m_{s, s'}}, & \forall m_{s, s'} \neq \infty;\\
k_{s, s'} k_{s', s} \geq 4, &  \forall m_{s, s'}=\infty.
\end{cases}
\end{align}
One of the examples of $K$ is to take $k_{s, s'} = 2 \cos \frac{\pi}{m_{s, s'}}$ with the convention $\frac{\pi}{m_{s,s'}}=0$ if $m_{s, s'}=\infty$. 
Consider the real vector space $V_S= \oplus_{s\in S} \mbb R \alpha_s$ with the basis $\{ \alpha_s|s\in S\}$.
For each $s\in S$, define an endomorphism $\sigma_s: V_S\to V_S$ by
\[
\sigma_s (\alpha_{s'}) = \alpha_{s'}+ k_{s, s'} \alpha_s, \forall s'\in S. 
\]
It is well-known that the assignment $s\mapsto \sigma_s$ defines an embeddeing
$\sigma: W_S\to \GL(V_S)$. 

\subsection{Edge contraction}
\label{Edge}

Fix  a pair $e=\{s_+, s_-\} $ in $S$ such that $m_{s_+, s_-}=3$. 
Fix a symbol  $s_0$, which is not in $S$. 
We define $S/e= S-\{ s_+, s_-\} +\{ s_0\}$. 
We define a symmetric matrix  $N=(n_{s,s'})_{s, s'\in S/e}$ by setting $n_{s, s}=1$ for all $s$ and if $s\neq s'$, we set
\[
n_{s, s'} =
\begin{cases}
m_{s, s'} & \mbox{if} \ s, s'  \neq s_0,\\
m_{s, s_+} + m_{s, s_-} -2  & \mbox{if} \ s'=s_0 \ \mbox{and}, m_{s, s_+}= 2\ \mbox{or}\ m_{s, s_-}=2, \\
\infty & \mbox{if} \ s'=s_0, m_{s, s_+}, m_{s, s_-} > 2.
\end{cases}
\]
Clearly $N$ is a Coxeter matrix. 
We call the pair $(S/e, N)$ the edge contraction of $(S, M)$ along $e$.

\subsection{Embedding under edge contraction}

Let $W_{S/e, N}$ be the Coxeter group associated to the pair $(S/e, N)$. 
We have 

\begin{prop}
\label{W-edge}
There is an embedding $\phi: W_{S/e, N} \to W_{S, M}$ defined by 
\[
s\mapsto s, \mbox{if} \ s\in S-\{s_{\pm}\}, \mbox{and} \ s_0 \mapsto s_+s_-s_+.
\]
\end{prop}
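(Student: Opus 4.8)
The plan is to use the linear representation $\sigma: W_S \to \GL(V_S)$ from the previous subsection as the main tool, exactly as the introduction promises (``a calculus of the associated reflection representations''). The strategy has two parts: first, check that the assignment $\phi$ on generators actually respects the defining relations of $W_{S/e,N}$, so that $\phi$ is a well-defined group homomorphism; second, prove injectivity by constructing a compatible map on reflection representations and showing the image representation is faithful.

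\textbf{Well-definedness.} For each pair $s, s' \in S/e$ I must verify $(\phi(s)\phi(s'))^{n_{s,s'}} = 1$ in $W_{S,M}$. When neither $s$ nor $s'$ is $s_0$ this is immediate since $n_{s,s'} = m_{s,s'}$ and $\phi$ fixes these generators. The substantive cases involve $s_0 \mapsto s_+s_-s_+$, the longest element of the rank-two parabolic $\langle s_+, s_-\rangle$ (a reflection, since $m_{s_+,s_-}=3$). First, $\phi(s_0)^2 = (s_+s_-s_+)^2 = 1$. Next, when $m_{s,s_+}=m_{s,s_-}=2$, both $s_+$ and $s_-$ commute with $s$, hence so does $s_+s_-s_+$, matching $n_{s,s_0}=2$. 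When exactly one of them, say $m_{s,s_-}=2$ and $m_{s,s_+}=m\geq 3$, I need $(s \cdot s_+s_-s_+)^{m+ m(s,s_-) - 2} = 1$ where $m(s,s_-)=2$, i.e. order $m$; here $s_-$ commutes with $s$, so $s\cdot s_+ s_- s_+$ and I should compute the order of this element inside $\langle s, s_+, s_-\rangle$ — this is the key braid-type computation. When both $m_{s,s_+}, m_{s,s_-} > 2$, $n_{s,s_0}=\infty$ imposes no relation, so nothing to check there. I expect \emph{this braid computation} — verifying the order of $s\,(s_+s_-s_+)$ is exactly $n_{s,s_0}$ — to be the main obstacle, and I would handle it either by an explicit reflection-representation calculation on the span of $\alpha_s, \alpha_{s_+}, \alpha_{s_-}$, computing eigenvalues of the product of the two reflections, or by recognizing the relevant dihedral/infinite dihedral subgroup structure.

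\textbf{Injectivity.} Once $\phi$ is a homomorphism, I would produce an injection $\psi: V_{S/e, N} \hookrightarrow V_{S,M}$ (for suitably chosen matrices $K$ on each side) intertwining the reflection representations along $\phi$: send $\alpha_s \mapsto \alpha_s$ for $s \neq s_0$, and $\alpha_{s_0} \mapsto \alpha_{s_+} + \alpha_{s_-}$ — this is the positive root of $\langle s_+, s_-\rangle$ fixed-up-to-sign data attached to the reflection $s_+s_-s_+$, whose associated root is $s_+(\alpha_{s_-}) = \alpha_{s_-} + k_{s_-,s_+}\alpha_{s_+}$; with the symmetric normalization $k_{s_+,s_-}=k_{s_-,s_+}=1$ (since $4\cos^2(\pi/3)=1$) this is exactly $\alpha_{s_+}+\alpha_{s_-}$. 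The verification that $\sigma_{\phi(s)} \circ \psi = \psi \circ \sigma_s$ for all generators $s$ of $W_{S/e,N}$ reduces to a finite check using \eqref{K} and the formula for $n_{s,s'}$; the nontrivial identities are precisely the ones relating $k$-values on $S/e$ to $k$-values on $S$, and they motivate why the normalization $k_{s,s_0} := k_{s,s_+} + k_{s,s_-}$ (when one of them vanishes) satisfies the required Coxeter-matrix constraint for $n_{s,s_0}$. Given such a $\psi$, since $\sigma: W_{S/e,N} \to \GL(V_{S/e,N})$ is faithful and factors as (restriction of $\sigma$ on $W_{S,M}$) $\circ\, \phi$ through the injective $\psi$, faithfulness of $\sigma$ on $W_{S,M}$ forces $\phi$ to be injective. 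I would close by remarking that the choice of $K$ is harmless: changing $K$ produces isomorphic-enough representations for the faithfulness argument, and the embedding $\phi$ itself does not depend on $K$.
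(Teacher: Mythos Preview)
Your approach is essentially the paper's: both identify $V_{S/e}$ with the subspace of $V_S$ via $\alpha_{s_0}\mapsto\alpha_{s_+}+\alpha_{s_-}$, verify that $\sigma_{s_+}\sigma_{s_-}\sigma_{s_+}$ restricts to $\tilde\sigma_{s_0}$ on that subspace, and deduce injectivity from faithfulness of $\tilde\sigma$ (the paper packages this as constructing a left inverse $W^{S/e}_{S,M}\to W_{S/e,N}$, but the content is the same intertwining computation). One remark: the braid check you flag as the ``main obstacle'' is immediate once you use $s_+s_-s_+=s_-s_+s_-$ --- if, say, $m_{s,s_-}=2$, then $s\cdot\phi(s_0)=s\cdot s_-s_+s_-=s_-(ss_+)s_-^{-1}$ is conjugate to $ss_+$ and hence has order $m_{s,s_+}=n_{s,s_0}$, and the case $m_{s,s_+}=2$ is symmetric.
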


\begin{proof}
By definition, we see that $\phi$ is a group homomorphism. 

We need to show that $\phi$ is injective. 
By assumption, we  consider the matrix $K$ relative to $(S, M)$ defined by  
\begin{align}
\label{K-edge}
k_{s, s'}= 2 \cos \frac{\pi}{m_{s,s'}}, \ \forall s, s'\in S.
\end{align}
We define  a matrix $\tilde K $ relative to $(S/e, N)$ by $\tilde k_{s, s'} = 2 \cos \frac{\pi}{n_{s, s'}}$ for all $s, s'\in S/e$. In particular, we have 
\begin{align}
\begin{cases}
\tilde k_{s, s'} = k_{s, s'} , & \mbox{if}\ s, s' \neq s_0, \\
\tilde k_{s, s_0}=k_{s, s_+} +k_{s, s_-} , & \mbox{if} \ s\neq s_0,\\
\tilde k_{s_0, s} = k_{s_+, s} +k_{s_-, s},&  \mbox{if} \ s\neq s_0.
\end{cases}
\end{align}
Recall the linear representation $V_S$ of $W_{S, M}$ defined via the matrix $K$ with the additional assumption (\ref{K-edge}) and   
the linear representation $V_{S/e}$ of $W_{S/e, N}$ via the above matrix $\tilde K$.
To avoid ambiguity, we write the associated embedding $\tilde \sigma: W_{S/e, N}\to \GL(V_{S/e})$, $s\mapsto \tilde \sigma_s$. 
By definition, we see that $V_{S/e}$ is a subspace of $V_S$ via the assignment
$\alpha_s\mapsto \alpha_s$, if $s\in S/e-\{s_0\}$, and $\alpha_{s_0}\mapsto \alpha_{s_+} +\alpha_{s_-}$. 
Let $W^{S/e}_{S, M}$ be the subgroup of $W_{S, M}$ generated by $s\in S-\{ s_{\pm}\}$ and $s_+ s_- s_+$. 
For any $s\in S$, we have 
\begin{align}
\begin{split}
\sigma_{s_+} \sigma_{s_-} \sigma_{s_+} (\alpha_{s}) 
& = \sigma_{s_+} \sigma_{s_-} ( \alpha_{s} + k_{s_+, s} \alpha_{s_+}) \\
& = \sigma_{s_+} (\alpha_s + k_{s_-, s} \alpha_{s_-} + k_{s_+, s} ( \alpha_{s_+}+\alpha_{s_-})) \\
& = \alpha_s + k_{s_+, s} \alpha_{s_+} + k_{s_-, s} (\alpha_{s_-}+\alpha_{s_+}) + k_{s_+, s} ( - \alpha_{s_+} + \alpha_{s_-}+\alpha_{s_+})\\
&= \alpha_s + \tilde k_{s_0, s} \alpha_{s_0}.
\end{split}
\end{align}
This implies that $\sigma_{s_+} \sigma_{s_-}\sigma_{s_+}$ leaves the subspace $V_{S/e}$ stable and when restricts to $V_{S/e}$, it coincides with the reflection $\tilde \sigma_{s_0}$ in $\GL(V_{S/e})$. 
It is clearly by definition that $\sigma_s$ for all $s\in S-\{s_{\pm}\}$ leaves $V_{S/e}$ stable and when restricts to $V_{S/e}$, it coincides with
the reflection $\tilde \sigma_{s}$ in $\GL(V_{S/e})$. 
Let $P$ be the parabolic subgroup of $\GL(V_S)$ consisting of all elements that leave $V_{S/e}$ stable. 
Then there exists a group homomorphism $\pi: P\to \GL(V_{S/e})$ via restriction $g\mapsto g|_{V_{S/e}}$. 
By composing with this group homomorphism, we have a group homomorphism $W^{S/e}_{S, M} \to \GL(V_{S/e})$ defined by $s\mapsto \sigma_s|_{V_{S/e}}$. 
Moreover, its image is exactly the image of $W_{S/e, N}$ under $\tilde \sigma$. 
Hence we have a surjective map $\psi: W^{S/e}_{S, M} \to W_{S/e, N}$ defined by $s\mapsto s$ and $s_+s_-s_+\mapsto s_0$.
Therefore, we have the following commutative diagram.
\[
\begin{CD}
W^{S/e}_{S, M} @>\sigma>> P \\
@V\psi VV @VV\pi V\\
W_{S/e, N} @>\tilde \sigma>> \GL(V_{S/e}).
\end{CD}
\] 
It is clear that we have $\psi \phi =1$, and hence $\phi$ is injective. 
The proposition is proved. 
\end{proof}

\begin{rem}
\begin{enumerate}
\item When $W$ is a Weyl group, the embedding $\phi$ first appeared in~\cite{Li23}. 

\item Our embedding is not a special case of the embeddings in~\cite{Mu93} and~\cite{EH24}. 
To an edge contraction, there is an obvious surjection $\pi: S\to S/e$. But it is not a Coxeter partition because the condition (2a) in Definition 1.1 in~\cite{EH24}   is not satisfied for $\pi$. 
It is  interesting to provide a unified setting for  the work in {\it loc. cit.} and  ours.  

\item 
Proposition~\ref{W-edge} can be deduced from~\cite[Theorem 3.9]{Dy87} and~\cite{De89} by leveraging the fact that the image of $\phi$ is a reflection subgroup of the Coxeter group $W_S$. 
Our proof is new, as far as we can see. 

\end{enumerate}
\end{rem}

The embedding $\phi$ extends naturally to an embedding 
\[
\phi^\aff: V_{S/e} \rtimes_{\tilde \sigma} W_{S/e, N} \to V_S \rtimes_{\sigma} W_{S, M}, (\alpha, w) \mapsto (\alpha, \phi(w)). 
\]
Let $V_{S, \mbb Z} = \oplus_{s\in \mbb Z} \mbb Z \alpha_s$ be a lattice in $V_{S}$. 
When $W_{S, M}$ is a Weyl group, we can choose $k_{s, s'}$ to be integers  for all $s, s'$ and hence $W_{S, M}$ leaves $V_{S, \mbb Z}$ stable. 
The group $$W_{S, M}^{\aff}= V_{S,\mbb Z} \rtimes_{\sigma} W_{S, M}$$ is the  affine Weyl group associated to $W_{S, M}$. 
Clearly $\phi^{\aff}$ restricts to a group embedding $$\phi^{\aff}: W^{\aff}_{S/e, N} \to W^{\aff}_{S, M}$$ is an affine version of $\phi$.

\subsection{Edge contraction along a linear branch}
\label{linear-branch}

In this section, we assume that $e=\{s_+, s_-\}$ is located on a linear branch, i.e., 
\begin{itemize}
\item[(B)]
{\it there exists a sequence $s_{j_0}=s_-, s_{j_1}=s_+, s_{j_2}, \cdots, s_{j_n}$ such that $m_{s_{j_k}, s_{j_{k+1}}} = 3$ for all $1\leq k\leq n-1$ and $m_{s_{j_k}, s}=2$ for all $k\geq 1$ and  $s \neq s_{j_{k+1}}, s_{j_{k-1}} $.}
\end{itemize}
Pictorially, the graph looks like the following.
\[
\xymatrix{
\circ \ar@{-}[dr] & & \\
\vdots & \underset{j_0}{\circ} \ar@{-}[r]^e &\underset{j_1}{ \circ}   \ar@{-}[r] & \underset{j_2}{ \circ} \ar@{-}[r] & \ar@{-}[r]   \cdots &\underset{j_n}{ \circ} \\
\circ \ar@{-}[ur] 
}
\]
Let $S'$ be a subset of $S$. Let $W_{S'}$ be the subgroup of $W_S$ generated by $s$ for all $s\in S'$. 
Let $\can: W_{S'} \to W_S$ be the canonical embedding. 

Let $w\in W_S$. Let $\tau_w: W_S\to W_S$ be the conjugation $w' \mapsto ww'w^{-1}$ for all $w'\in W_S$. 

\begin{prop}
\label{tree}
Assuming the condition (B), we have the following commutative diagram. 
\[
\begin{CD}
W_{S-\{s_{j_n}\}} @> \can >> W_S\\
@V\tau VV@VV\tau_{s_{j_1}s_{j_2}\cdots s_{j_n}} V\\
W_{S/e} @>\phi >> W_S
\end{CD}
\]
where $\tau $ is an isomorphism defined  on the generators $s$ by $\tau (s) = s $ if $s\neq s_{j_\ell}$ for $0\leq \ell \leq n$ and 
$$\tau(s_{j_\ell})= 
\begin{cases}
s_{j_{\ell +1}} & \mbox{if}\  \ell \geq 1,\\
s_{0}  & \mbox{if} \ \ell=0.
\end{cases}
$$
\end{prop}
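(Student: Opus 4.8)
The plan is to verify the asserted square commutes by evaluating both composites on each generator of $W_{S-\{j_n\}}$, using the explicit formulas for $\phi$ (Proposition~\ref{W-edge}) and for $\tau$. So I would take an arbitrary generator $s$ of $W_{S-\{j_n\}}$ and split into the cases $s\notin\{s_{j_0},\dots,s_{j_{n-1}}\}$, and $s=s_{j_\ell}$ for $0\le \ell\le n-1$. Set $c=s_{j_1}s_{j_2}\cdots s_{j_n}$ for brevity, so the right-hand vertical arrow is $\tau_c$. Going down-then-right sends $s$ to $\phi(\tau(s))$; going right-then-down sends $s$ to $\tau_c(s)=c\,s\,c^{-1}$. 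The claim is that these agree.

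For the generic case $s\notin\{s_{j_\ell}\}_{\ell=0}^{n-1}$ (and $s$ not $s_{j_n}$ since $s\in S-\{j_n\}$): then $\tau(s)=s\in S/e$ with $s\neq s_0$, so $\phi(\tau(s))=s$; and by condition (B), $s$ commutes with each $s_{j_k}$ for $k\ge 1$ — indeed $m_{s_{j_k},s}=2$ unless $s=s_{j_{k\pm1}}$, and $s$ is none of these — so $c\,s\,c^{-1}=s$. (One should note the mild subtlety that $s$ could equal $s_{j_{n+1}}$-type neighbors outside the branch; but (B) only constrains $m_{s_{j_k},\cdot}$ for indices $s\ne s_{j_{k+1}},s_{j_{k-1}}$, and $s_{j_n}$ is excluded from the source, so all relevant commutations hold.) For $s=s_{j_\ell}$ with $1\le\ell\le n-1$: then $\tau(s_{j_\ell})=s_{j_{\ell+1}}\in S-\{s_\pm\}$ (note $\ell+1\ge 2$), hence $\phi(\tau(s_{j_\ell}))=s_{j_{\ell+1}}$; on the other side I must show $c\,s_{j_\ell}\,c^{-1}=s_{j_{\ell+1}}$. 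This is the key computation: using that $s_{j_\ell}$ commutes with $s_{j_k}$ whenever $|k-\ell|\ge 2$ and braids with $s_{j_{\ell\pm1}}$, the product $c=s_{j_1}\cdots s_{j_n}$ conjugates $s_{j_\ell}$ to $s_{j_{\ell+1}}$ — a standard "shift along a type-$A$ chain" identity, provable by induction on $n-\ell$ (or by recognizing $c$ as a Coxeter element of the $A$-type sub-chain and using the braid relation $abab=baba$, i.e. $(ab)a(ab)^{-1}=b$ for $m_{a,b}=3$). Finally for $\ell=0$: $\tau(s_{j_0})=s_0$, so $\phi(\tau(s_{j_0}))=s_+s_-s_+=s_{j_1}s_{j_0}s_{j_1}$; and $c\,s_{j_0}\,c^{-1}$ — since $s_{j_0}$ commutes with $s_{j_k}$ for $k\ge 2$ — collapses to $s_{j_1}s_{j_0}s_{j_1}$, matching.

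The main obstacle, and the only step requiring care, is the chain-conjugation identity $s_{j_1}\cdots s_{j_n}\,s_{j_\ell}\,(s_{j_1}\cdots s_{j_n})^{-1}=s_{j_{\ell+1}}$ for $0\le\ell\le n-1$ (with the $\ell=0$ output read as $s_{j_1}s_{j_0}s_{j_1}$). I would isolate this as a short lemma about the Coxeter element of a finite type-$A$ Coxeter system: writing $a_k=s_{j_k}$ with $m_{a_k,a_{k+1}}=3$ and $m_{a_k,a_l}=2$ for $|k-l|\ge 2$, one has $(a_1a_2\cdots a_n)a_\ell(a_1a_2\cdots a_n)^{-1}=a_{\ell+1}$ for $1\le\ell\le n-1$. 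The proof is a clean induction: commute $a_\ell$ leftward past $a_1,\dots,a_{\ell-2}$, apply $a_{\ell-1}a_\ell a_{\ell-1}^{-1}$-type manipulations — more directly, since $a_\ell$ commutes with $a_1,\dots,a_{\ell-2}$ and with $a_{\ell+2},\dots,a_n$, conjugation by $c$ reduces to conjugation by $a_{\ell-1}a_\ell a_{\ell+1}$, and then the braid relations $a_{\ell-1}a_\ell a_{\ell-1}=a_\ell a_{\ell-1}a_\ell$ (only needed to move $a_{\ell-1}$ out of the way) together with $a_\ell a_{\ell+1}a_\ell=a_{\ell+1}a_\ell a_{\ell+1}$ give $a_{\ell-1}a_\ell a_{\ell+1}\cdot a_\ell\cdot a_{\ell+1}^{-1}a_\ell^{-1}a_{\ell-1}^{-1}=a_{\ell+1}$. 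Once this lemma is in hand, the three cases above assemble immediately, and since $\tau$ is visibly a bijection on generators respecting the defining relations (it is the relabeling induced by the chain shift, which is an automorphism of the relevant sub-Coxeter-matrix by condition (B)), $\tau$ is an isomorphism, completing the verification that the diagram commutes.
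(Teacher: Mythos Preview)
Your proposal is correct and follows essentially the same route as the paper: verify $\phi\circ\tau=\tau_c\circ\can$ on generators, with the heart of the argument being the chain--conjugation identity $(s_{j_1}\cdots s_{j_n})\,s_{j_\ell}\,(s_{j_1}\cdots s_{j_n})^{-1}=s_{j_{\ell+1}}$, proved by commuting away the distant factors and applying the braid relation---exactly as in the paper. You are somewhat more explicit than the paper about the generic case $s\notin\{s_{j_0},\dots,s_{j_n}\}$ and about why $\tau$ is a group isomorphism (you check the Coxeter matrices match under the relabeling, whereas the paper infers it from the invertibility of $\tau_c$), but these are minor expository differences rather than a different strategy.
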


\begin{proof}
For $i\geq 1$, we have 
\begin{align*}
\tau_{s_{j_1}\cdots s_{j_n}} \can (s_{j_\ell} ) 
& = s_{j_1} \cdots s_{j_n} s_{j_\ell} s_{j_n} \cdots s_{j_1}\\
&= 
s_{j_1} \cdots s_{j_{\ell+1}} s_{j_\ell} s_{j_{\ell +1}} \cdots s_{j_1} \\
& = s_{j_1} \cdots s_{j_\ell} ( s_{j_\ell} s_{j_{\ell+1}} s_{j_\ell}) ) s_{j_\ell}\cdots s_{j_1}\\
& = s_{j_1} \cdots s_{j_{\ell-1}} s_{j_{\ell+1}} s_{j_{\ell-1}} \cdots s_{j_1}\\
& =s_{j_{\ell+1}}.
\end{align*}
For $i=1$, we have
\begin{align*}
\tau_{s_1\cdots s_n} \can (s_{j_0} ) 
=s_{j_1} \cdots s_{j_n} s_{j_0} s_{j_n} \cdots s_{j_1}
 = s_{j_1}s_{j_0}s_{j_1}=  s_+ s_-s_+=s_0.
\end{align*}
This implies that the image of $\tau_{s_{j_1}\cdots s_{j_n}} \can$ is the same as the image of $\phi$. And by chasing the generators, we see the well-definedness of the morphism $\tau$. 
This also implies the commutativity of the diagram. By considering the inverse of $\tau_{s_{j_1}\cdots s_{j_n}}$ we see that $\tau$ is isomorphic. This finishes the proof. 
\end{proof}

There exists a canonical embedding $\can: V_{S-\{s_{j_n}\}} \to V_{S}$ and $\can : V_{S/e} \to V_S$. 
Define an linear isomorphism $\sigma^1: V_{S-\{s_{j_n}\} } \to V_{S/e}$ by sending $\alpha_s \to \alpha_s$ for all $s\neq s_{ j_\ell}$ and 
$\alpha_{s_{j_\ell}} \mapsto \sigma_{s_{j_1}} \cdots \sigma_{s_{j_n}} (\alpha_{s_{j_\ell}})$ for all $\ell$. 
The commutative diagram in Proposition~\ref{tree} can lift to a commutative diagram
\[
\begin{CD}
V_{S-\{s_{j_n}\}} \rtimes W_{S-\{j_n\}} @>  \can\times \can  >> V_S\rtimes W_S\\
@V \sigma^1 \times \tau VV @VV \sigma_{s_{j_1}} \cdots \sigma_{s_{j_n}} \times \tau_{s_{j_1} \cdots s_{j_n}} V\\
V_{S/e} \rtimes W_{S/e} @> \phi^\aff>> V_S \rtimes W_S.
\end{CD}
\]

\begin{cor}
Assuming that $W_{S, M}$ is a finite Weyl group so that the assumption (B) holds. We have the following commutative diagram.
\[
\begin{CD}
W^{\aff}_{S-\{s_{j_n}\}} @>  \can\times \can  >> W^{\aff}_S\\
@V \sigma^1 \times \tau VV @VV \sigma_{s_{j_1}} \cdots \sigma_{s_{j_n}} \times \tau_{s_{j_1} \cdots s_{j_n}} V\\
W^{\aff}_{S/e} @> \phi^\aff>> W^{\aff}_S.
\end{CD}
\]
\end{cor}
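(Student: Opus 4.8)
The plan is to deduce this diagram by restricting to the integral lattices the commutative square of semidirect products $V_\bullet\rtimes W_\bullet$ displayed just above (the one lifting the diagram of Proposition~\ref{tree}). Since $W_{S,M}$ is a finite Weyl group, fix $K=(k_{s,s'})$ to consist of the corresponding Cartan integers; then $k_{s,s'}\in\mbb Z$ for all $s,s'$, and from the formulas relating $\tilde K$ to $K$ in the proof of Proposition~\ref{W-edge} also $\tilde k_{s,s'}\in\mbb Z$ for all $s,s'\in S/e$. Hence each of $W_{S,M}$, $W_{S-\{j_n\},M}$ and $W_{S/e,N}$ stabilizes the lattice $V_{S,\mbb Z}$, $V_{S-\{j_n\},\mbb Z}$, $V_{S/e,\mbb Z}$ respectively (each generator $\sigma_s$ or $\tilde\sigma_s$ now has an integer matrix in the distinguished basis), so the four affine groups $W^\aff_\bullet=V_{\bullet,\mbb Z}\rtimes W_\bullet$ in the statement are defined.

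It then suffices to show that each arrow of the $V_\bullet\rtimes W_\bullet$ square carries the integral subgroup on the source into the one on the target, and that the induced maps retain the asserted injectivity/bijectivity; commutativity of the smaller square is inherited from the larger one. For the top map $\can\times\can$ this is immediate, since $V_{S-\{j_n\},\mbb Z}\subseteq V_{S,\mbb Z}$ is spanned by a subset of the basis and $W_{S-\{j_n\}}$ is a standard parabolic subgroup. For the bottom map it is precisely the statement recalled just before Section~\ref{linear-branch}, that $\phi^\aff$ restricts to an embedding $W^\aff_{S/e,N}\hookrightarrow W^\aff_{S,M}$. For the right vertical map, note that $\sigma_{s_{j_1}}\cdots\sigma_{s_{j_n}}\times\tau_{s_{j_1}\cdots s_{j_n}}$ is exactly conjugation in $V_S\rtimes W_S$ by the element $(0,\,s_{j_1}\cdots s_{j_n})$; as $s_{j_1}\cdots s_{j_n}\in W_{S,M}$ stabilizes $V_{S,\mbb Z}$, this inner automorphism restricts to an automorphism of $W^\aff_{S,M}=V_{S,\mbb Z}\rtimes W_{S,M}$.

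The one arrow requiring a genuine computation is the left vertical $\sigma^1\times\tau$. By Proposition~\ref{tree}, $\tau$ is an isomorphism of Coxeter groups, so it remains to check that $\sigma^1$ restricts to a lattice isomorphism $V_{S-\{j_n\},\mbb Z}\to V_{S/e,\mbb Z}$ (the compatibility of $\sigma^1$ with the two actions is already built into the $V_\bullet\rtimes W_\bullet$ square). For this I would evaluate $\sigma^1(\alpha_{j_\ell})=\sigma_{s_{j_1}}\cdots\sigma_{s_{j_n}}(\alpha_{j_\ell})$ along the linear branch: there the only nonzero $k$-values among $\alpha_{j_0},\dots,\alpha_{j_n}$ are $k_{s_{j_k},s_{j_{k+1}}}=1$, so $\sigma_{s_{j_k}}$ acts on $\mrm{span}\{\alpha_{j_0},\dots,\alpha_{j_n}\}$ as the $k$-th simple transposition in the Weyl group of type $A_{n+1}$. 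Writing $\alpha_{j_k}=\ve_k-\ve_{k+1}$, the element $s_{j_1}\cdots s_{j_n}$ becomes the cycle fixing $\ve_0$ and sending $\ve_k\mapsto\ve_{k+1}$ for $1\le k\le n$ and $\ve_{n+1}\mapsto\ve_1$; hence $\sigma^1(\alpha_{j_0})=\ve_0-\ve_2=\alpha_{s_-}+\alpha_{s_+}$, which is exactly the image of $\alpha_{s_0}$ under $V_{S/e}\hookrightarrow V_S$, while $\sigma^1(\alpha_{j_\ell})=\alpha_{j_{\ell+1}}$ for $1\le\ell\le n-1$ and $\sigma^1$ fixes $\alpha_s$ for every other $s$. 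Thus $\sigma^1$ sends the distinguished basis of $V_{S-\{j_n\},\mbb Z}$ bijectively onto that of $V_{S/e,\mbb Z}$, so it is a lattice isomorphism, and therefore $\sigma^1\times\tau$ restricts to an isomorphism $W^\aff_{S-\{j_n\}}\xrightarrow{\ \sim\ }W^\aff_{S/e}$. The only delicate point is this last root-system computation pinning down $\sigma^1$ on the branch; everything else is a routine verification that the maps already constructed respect the $\mbb Z$-structure.
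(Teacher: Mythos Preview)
Your proposal is correct and follows exactly the approach the paper intends: the corollary is stated without proof, as an immediate restriction of the preceding $V_\bullet\rtimes W_\bullet$ diagram to the integral lattices, and you have simply supplied the details the paper leaves implicit. In particular, your verification that $\sigma^1$ carries the basis of $V_{S-\{j_n\},\mbb Z}$ bijectively onto that of $V_{S/e,\mbb Z}$ via the type-$A$ calculation on the branch is the one point the paper's phrase ``linear isomorphism'' asserts but does not justify.
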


\subsection{Roots}

In this section, we pick $K$ so that $K_{s, s'} = 2 \cos \frac{\pi}{m_{s,s'}}$. 
The root system of a Coxeter system  $(W_S, S)$ is defined to be $\Phi_S=\{ w (\alpha_s)|  w\in W_S, s\in S\}$. 
Let $\Phi_S^+=\Phi_S \cap \oplus_{s\in S} \mbb Z_{\geq 0} \alpha_s$. It is known that $\Phi_S = \Phi_S^+ \sqcup (-\Phi_S^+)$.
It is clear from the definitions that we have $\Phi_{S/e} \subseteq \Phi_S$ and $\Phi_{S/e}^+\subseteq \Phi_{S}^+$.

\subsection{Length functions}

Let $\ell_S: W_S\to \mbb Z_{\geq 0}$ be the length function by declaring $\ell_S (1)=0$ and $\ell_S(x)$ to be the number of factors in a reduced expression of $x\neq 1$. 
We write $\ell_{S/e}$ for the length function of $W_{S/e}$. 
Write $\mult_{s}(s_1, \cdots, s_n)$ for the multiplicity of $s$ in the tuple $(s_1, \cdots, s_n)$. 

Let $w\in W_{S/e}$. If $w= s_1\cdots s_k$ for $k\in \ell_{S/e} (w)$ is a reduced expression, then $\phi(w) = \phi( s_1) \cdots \phi(s_k)$.
If $s_i=s_0$, then $\phi(s_i) = s_+s_-s_+$ and so we have
\[
\ell_S(\phi(x)) \leq \ell_{S/e}(x) + 2 \ \mult_{s_0} ( s_1, \cdots, s_k). 
\]
If the reduced expression is chosen so that the multiplicity of $s_0$ is the smallest, the above inequality still holds.
The inequality is not sharp, see the following Remark~\ref{length-tree}.  It is interesting to provide a formula for $\ell_S (\phi(x))$. 

\begin{rem}
\label{length-tree}
(1) Assuming the condition (B), we have
\[
\ell_S(\phi (w)) \leq \ell_{S/e} (w)  + 2n, \forall w\neq 1.  
\]

(2) When $W_S=S_{n+1}$ with $S=\{1,\cdots, n+1\}$ and $e=\{i_+, i_++1\}$. 
Regard elements in $W$ as permutations of $\{1,\cdots, n\}$. We have
\[
\ell_S(\phi(w)) =\ell_{S/e} (w) + \# \{ k\in \mbb Z_{\geq 1} | k < i_+, w(k) \geq i_+\} + \# \{ k \in \mbb Z_{\geq 1} | k\geq i_+ , w(k) < i_+\}. 
\]
Since we do not use this formula, we shall skip its proof.

(3) The same formula in (2) applies to the case when $W_S$ is affine type $A$. And one can also deduce a similar formula for all classical Weyl groups.
\end{rem}

\section{Hecke algebras and edge contractions}
\label{Hecke}
Let $v$ be an indeterminate. Let $\A=\mbb Z[v, v^{-1}]$ be the ring of Laurent polynomials with coefficients in $\mbb Z$.

\subsection{Iwahori-Hecke algebras}

Let $W_S$ be the Coxeter group associated to $(S, M)$. 
Let $L_S: S \to \mbb Z$ be a function such that $L_S (s) =L_S(s')$ whenever $s, s' $ are conjugate in $W_S$. 
The function $L_S$ extends to a weight function $L_S: W_S\to \mbb Z$ by imposing the condition
$L_S(ww') = L_S(w) L_S(w')$ if $\ell_S(ww') = \ell_S(w) \ell_S(w')$, where $\ell_S$ is the length function of $W_S$. 
Let $v_s= v^{L_S(s)}$ for all $s\in S$. 

Let $H_S\equiv H_{W, S}$ be the associative algebra over $\A$ with $1$ generated by $T_s$ for $s\in S$ and subject to the following relations.
\begin{align}
\label{H1}
\tag{H1}
& (T_s - v_s) (T_s+v_{s}^{-1}) =0, \ \forall s\in S,\\
\label{H2}
\tag{H2}
& T_s T_{s'} T_s\cdots  = T_{s'} T_s T_{s'} \cdots  , \ \forall s\neq s',  m_{s, s'} \neq \infty,
\end{align}
where there are $m_{s,s'}$ on both sides of the last defining relation. 
$H_S$ is the Hecke, or rather Iwahori-Hecke, algebra associated to the weighted Coxeter systerm $(W, S, L_S)$. 

If $w= s_1\cdots s_k$ is a reduced expression, we write $T_w= T_{s_1} T_{s_2} \cdots T_{s_k}$. 
It is known that $T_w$ is independent of the choice of the reduced expression of $w$. 
It is known that $\{T_w|w\in W_S\}$ is an $\A$-basis of $H_S$. 

Let $(S/e, N)$ be an edge contraction of $(S, M)$ along $e=\{s_{\pm}\}$. 
Since $m_{s_+, s_-}=3$, the weight function $L_S$ induces a weight function $L_{S/e}: S/e\to \mbb Z$ by setting $L_{S/e}(s_0)=L_S(s_+)$
and $L_{S/e}(s)=L_S(s)$ for all $s\neq s_0\in S/e $.
Let $H_{S/e}$ be the Hecke algebra associated to the weighted Coxeter system $(W_{S/e,N}, S/e)$ and $L_{S/e}$.
To avoid confusion, we write $T'_s$ for the generator in $H_{S/e}$ attached to $s\in S/e$.

\begin{prop}
\label{phi-H}
There exists an algebra homomorphism $\phi_v: H_{S/e} \to H_S$ defined by 
$T'_s \mapsto T_s$ for all $s\neq s_0$ and $T'_{s_0} \mapsto T_{s_+ } T_{s_-} T_{s_+}^{-1}$.
\end{prop}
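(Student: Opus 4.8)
The plan is to invoke the universal property of $H_{S/e}$: it is enough to check that the elements $T_s\in H_S$ (for $s\neq s_0$) together with the element $X:=T_{s_+}T_{s_-}T_{s_+}^{-1}\in H_S$ satisfy the two families of defining relations \eqref{H1} and \eqref{H2} of $H_{S/e}$. The single observation that drives the whole argument is that $T_{s_+}$ is a unit in $H_S$ (from \eqref{H1}, $T_{s_+}^{-1}=T_{s_+}-v_{s_+}+v_{s_+}^{-1}$), so conjugation $\mathrm{Ad}(T_{s_+})\colon x\mapsto T_{s_+}xT_{s_+}^{-1}$ is an $\A$-algebra automorphism of $H_S$; it sends $T_{s_-}\mapsto X$ and fixes every $T_s$ with $m_{s,s_+}=2$. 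Dually, the $m_{s_+,s_-}=3$ case of \eqref{H2} reads $T_{s_+}T_{s_-}T_{s_+}=T_{s_-}T_{s_+}T_{s_-}$, which rearranges to $X=T_{s_-}^{-1}T_{s_+}T_{s_-}$; hence $\mathrm{Ad}(T_{s_-}^{-1})$ sends $T_{s_+}\mapsto X$ and fixes every $T_s$ with $m_{s,s_-}=2$.

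Verification of \eqref{H1}. For $s\neq s_0$ there is nothing to do, as $L_{S/e}(s)=L_S(s)$. For $s_0$: since $m_{s_+,s_-}=3$ is odd, $s_+$ and $s_-$ are conjugate in $W_S$ (for instance $s_-=(s_+s_-)s_+(s_+s_-)^{-1}$), so by the assumption on $L_S$ we get $L_S(s_-)=L_S(s_+)=L_{S/e}(s_0)$, i.e.\ $v_{s_-}=v_{s_0}$. As $X=\mathrm{Ad}(T_{s_+})(T_{s_-})$ is conjugate to $T_{s_-}$, it satisfies the same quadratic relation $(X-v_{s_-})(X+v_{s_-}^{-1})=0$, which is precisely \eqref{H1} for $T'_{s_0}$.

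Verification of \eqref{H2}. For a pair $s\neq s'$ in $S/e\setminus\{s_0\}$ with $n_{s,s'}\neq\infty$ we have $n_{s,s'}=m_{s,s'}$, and the required braid identity between $T_s$ and $T_{s'}$ is literally \eqref{H2} in $H_S$. It remains to treat pairs $\{s,s_0\}$ with $s\neq s_0$, and here a relation is imposed only when $n_{s,s_0}\neq\infty$, that is, by the definition of $N$, only when $m_{s,s_+}=2$ or $m_{s,s_-}=2$, so nothing has to be verified in the case $m_{s,s_+},m_{s,s_-}>2$. If $m_{s,s_+}=2$, then $n_{s,s_0}=m_{s,s_-}$; applying the automorphism $\mathrm{Ad}(T_{s_+})$ to the length-$m_{s,s_-}$ braid relation between $T_s$ and $T_{s_-}$ in $H_S$, and using that $\mathrm{Ad}(T_{s_+})$ fixes $T_s$ and carries $T_{s_-}$ to $X$, we obtain exactly the length-$n_{s,s_0}$ braid relation between $T_s$ and $X$. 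If instead $m_{s,s_-}=2$, then $n_{s,s_0}=m_{s,s_+}$, and applying $\mathrm{Ad}(T_{s_-}^{-1})$ to the length-$m_{s,s_+}$ braid relation between $T_s$ and $T_{s_+}$ produces the desired relation in the same manner. (If $m_{s,s_+}=m_{s,s_-}=2$ then $n_{s,s_0}=2$ and either automorphism shows that $T_s$ commutes with $X$.) This exhausts all relations, so $\phi_v$ is well defined.

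The main point — the thing to be careful about rather than a genuine obstacle — is to match the braid lengths correctly by reading $n_{s,s_0}$ off the definition of the contracted matrix; the one computation involved is the rearrangement $X=T_{s_-}^{-1}T_{s_+}T_{s_-}$, which is immediate from the length-$3$ braid relation. It is worth stressing that the potentially delicate configuration, in which $s$ is adjacent to both $s_+$ and $s_-$ with both labels $>2$, contributes an $\infty$ entry to $N$ and therefore imposes no relation at all, so it never needs to be checked.
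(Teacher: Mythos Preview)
Your proof is correct and essentially identical to the paper's own argument: both verify the defining relations by conjugating the quadratic and braid relations of $H_S$ by $T_{s_+}$ (respectively $T_{s_-}^{-1}$, via the identity $T_{s_+}T_{s_-}T_{s_+}^{-1}=T_{s_-}^{-1}T_{s_+}T_{s_-}$), using that this conjugation fixes $T_s$ whenever $m_{s,s_+}=2$ (respectively $m_{s,s_-}=2$). Your write-up makes the equality $v_{s_0}=v_{s_-}$ explicit via conjugacy of $s_+$ and $s_-$, and phrases things in terms of $\mathrm{Ad}$, but the underlying computation is the same.
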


\begin{proof}
We need to show that $\Phi(T'_s)$ for all $s\in S$ satisfies the defining relations for $H_{S/e}$. 
All relations hold naturally except those involve $s_0$. 
For (\ref{H1}), we have
\begin{align*}
(\phi_v (T'_{s_0}) - v_{s_0})  (\phi_v(T'_{s_0}) + v^{-1}_{s_0})
&=(T_{s_+} T_{s_-} T_{s_+}^{-1} - v_{s_-} ) ( T_{s_+} T_{s_-} T_{s_+}^{-1} + v^{-1}_{s_-})\\
&= T_{s_+} ( T_{s_-} - v_{s_-}) ( T_{s_-} + v^{-1}_{s_-}) T_{s_+}^{-1} \\
&=0.
\end{align*}
For (\ref{H2}), if $s'=s_0$ and $m_{s, s_+}=2$, we have
\begin{align*}
\phi_v(T'_s) \phi_ v (T'_{s_0}) \phi_v(T'_s) \cdots 
&= T_{s_+} ( T_{s} T_{s_-} T_{s} \cdots ) T_{s_+}^{-1},\\
\phi_v(T'_{s_0}) \phi_ v (T'_{s}) \phi_v(T'_{s_0}) \cdots
& = T_{s_+}  ( T_{s_-} T_{s} T_{s_-} \cdots ) T_{s_+}^{-1},
\end{align*} 
where the products in the parentheses all have $m_{s, s_-}$ factors.
Hence the condition (\ref{H2}) holds in this case. 
If $s'=s_0$ and $m_{s, s_-}=2$, then the relation (\ref{H2}) remains valid by a similar argument 
thanks to the identity $T_{s_+} T_{s_-} T_{s_+}^{-1} = T_{s_-}^{-1} T_{s_+} T_{s_-}$. 
This finishes the proof. 
\end{proof}

Moreover, we have 

\begin{thm}
\label{H-inj}
The morphism $\phi_v$ is injective. 
\end{thm}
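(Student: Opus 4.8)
The plan is to deduce the injectivity of $\phi_v$ from the injectivity of $\phi$ (Proposition~\ref{W-edge}) by a specialization argument, exploiting the fact that $H_S$ is a free $\A$-module with basis $\{T_w\}_{w\in W_S}$ and that $H_S$ deforms $\mbb Z[W_S]$. Concretely, I would first record that $\phi_v$ is a map of free $\A$-modules, so injectivity can be checked after a faithfully flat base change or after specializing $v$ appropriately; the cleanest route is to show that $\phi_v(T'_w) $ is, for each $w \in W_{S/e}$, a "leading term" element, i.e. $\phi_v(T'_w) = \pm\, T_{\phi(w)} + (\text{lower order})$, where "lower order" means an $\A$-combination of $T_x$ with $\ell_S(x) < \ell_S(\phi(w))$, or more robustly an $\A$-combination of $T_x$ with $x$ strictly below $\phi(w)$ in Bruhat order. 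Since $\phi$ is injective by Proposition~\ref{W-edge}, the elements $\{\phi(w) : w \in W_{S/e}\}$ are pairwise distinct, so a triangularity/leading-term argument will show that $\{\phi_v(T'_w)\}_{w \in W_{S/e}}$ is part of a basis of $H_S$, hence $\A$-linearly independent, giving injectivity.

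The key steps, in order, would be: (i) reduce to showing that $\{\phi_v(T'_w) : w \in W_{S/e}\}$ is $\A$-linearly independent in $H_S$, since $\{T'_w\}$ is an $\A$-basis of $H_{S/e}$ and $\phi_v(T'_w) = \prod \phi_v(T'_{s_i})$ for a reduced word $w = s_1\cdots s_k$; (ii) establish the leading-term claim for the single generator: $\phi_v(T'_{s_0}) = T_{s_+}T_{s_-}T_{s_+}^{-1}$, and using $(T_{s_+}-v_{s_+})(T_{s_+}+v_{s_+}^{-1}) = 0$ one gets $T_{s_+}^{-1} = T_{s_+} - (v_{s_+}-v_{s_+}^{-1})$, so $\phi_v(T'_{s_0}) = T_{s_+}T_{s_-}T_{s_+} - (v_{s_+}-v_{s_+}^{-1}) T_{s_+}T_{s_-}$, whose leading term with respect to length is $T_{s_+s_-s_+} = T_{\phi(s_0)}$ (here $s_+s_-s_+$ is reduced since $m_{s_+,s_-}=3$), with the correction term supported on elements of smaller length; (iii) propagate this through products: whenever $w = s_1\cdots s_k$ is a reduced expression in $W_{S/e}$ chosen, say, with the multiplicity of $s_0$ minimal, then $\phi(w) = \phi(s_1)\cdots\phi(s_k)$ need not be length-additive, but the length inequality $\ell_S(\phi(w)) \le \ell_{S/e}(w) + 2\,\mult_{s_0}(s_1,\dots,s_k)$ recorded before Remark~\ref{length-tree} together with standard facts about multiplication in $H_S$ ($T_x T_s = T_{xs}$ if $\ell_S(xs)>\ell_S(x)$, else $T_{xs} + (v_s-v_s^{-1})T_x$) shows by induction that $\phi_v(T'_w) = c_w\, T_{\phi(w)} + \sum_{\ell_S(x) < \ell_S(\phi(w))} a_{w,x} T_x$ with $c_w \in \{\pm 1\}$; (iv) conclude: since $w \mapsto \phi(w)$ is injective, distinct $w$ give distinct top indices $\phi(w)$, and an easy triangularity argument (order the $w$'s by $\ell_S(\phi(w))$) shows the $\phi_v(T'_w)$ are $\A$-linearly independent.

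An alternative, possibly shorter, packaging of step (iii)–(iv) is to specialize $v \mapsto 1$: under the quotient $\A \to \mbb Z$, $v\mapsto 1$, the algebra $H_S$ becomes $\mbb Z[W_S]$, $H_{S/e}$ becomes $\mbb Z[W_{S/e}]$, and $\phi_v$ becomes the group-algebra map $\mbb Z[\phi] \colon \mbb Z[W_{S/e}] \to \mbb Z[W_S]$, which is injective because $\phi$ is injective (Proposition~\ref{W-edge}) — a group homomorphism being injective is equivalent to its linearization over $\mbb Z$ being injective. However, injectivity is not automatically preserved under specialization, so to make this work one still needs the freeness of $H_{S/e}$ over $\A$ and a Nakayama/triangularity input; the honest statement is that $\phi_v$ is a map of free $\A$-modules whose specialization at $v=1$ is injective and which is "compatible with the $T_w$-bases up to lower order terms," and these two facts together force injectivity. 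I expect the main obstacle to be exactly this bookkeeping: controlling precisely how $\phi_v(T'_w)$ expands in the $\{T_x\}$ basis when $\phi$ fails to be length-additive (which already happens for $w = s_0$, as $\ell_S(\phi(s_0)) = 3 > 1 = \ell_{S/e}(s_0)$), i.e.\ proving the unitriangularity claim in step (iii) cleanly; once that is in hand, injectivity is immediate from injectivity of $\phi$.
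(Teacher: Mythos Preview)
Your alternative route---specialize $v\mapsto 1$ and use Proposition~\ref{W-edge}---is exactly the paper's proof, and it works more cleanly than you suggest. You write that ``injectivity is not automatically preserved under specialization'' and that one therefore needs a ``Nakayama/triangularity input''; in fact no triangularity is needed at all. The argument is: if $x=\sum_w c_w T'_w$ lies in $\ker\phi_v$, then specializing at $v=1$ (where $\phi_v$ becomes the group-ring map $\mbb Z[\phi]$, injective by Proposition~\ref{W-edge}) forces $c_w(1)=0$, hence $(v-1)\mid c_w$ for all $w$. Since $H_{S/e}$ and $H_S$ are free over the domain $\A$, one can divide $x$ by $(v-1)$ and stay in $\ker\phi_v$; iterating gives $(v-1)^n\mid c_w$ for all $n$, so $c_w=0$. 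Freeness of both sides over the domain $\A$ is the only structural input.

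Your primary route (unitriangularity of $\phi_v(T'_w)$ against $T_{\phi(w)}$) would be a genuinely different proof, but step~(iii) is a real gap, as you yourself flag. A naive induction on a reduced word does not work: when you multiply out $\phi_v(T'_{s_1})\cdots\phi_v(T'_{s_k})$, intermediate products can and do produce terms $T_x$ with $\ell_S(x)>\ell_S(\phi(w))$, which must then cancel. For instance in type $A_3$ with $e=\{s_1,s_2\}$ and $w=s_0 s_3 s_0$, the partial product $T_{s_1}T_{s_2}T_{s_1}\cdot T_{s_3}\cdot T_{s_1}T_{s_2}T_{s_1}$ already contains $(v-v^{-1})T_{w_0}$ at length~$6$, while $\ell_S(\phi(w))=5$; the length-$6$ term only disappears after combining with the correction terms from $T_{s_1}^{-1}=T_{s_1}-(v-v^{-1})$. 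So the ``lower order terms'' bookkeeping is not local to each factor, and the induction you sketch does not go through without a more refined invariant (e.g.\ systematically rewriting $T_{s_+}T_{s_-}T_{s_+}^{-1}$ as $T_{s_+}T_{s_-}T_{s_+}^{-1}$ or $T_{s_-}^{-1}T_{s_+}T_{s_-}$ depending on the neighbouring letters, and arguing via a suitable normal form). That may well be doable, but it is substantially more work than the specialization argument, which already closes the proof.
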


\begin{proof}
Define a ring homomorphism $e_1: \A \to \mbb C$ by sending $v$ to $1$. 
Consider the tensor algebra $\mbb C\otimes_\A H$. 
We write $T_w$ for $1\otimes_\A T_w$. 
It is known that $\mbb C\otimes_\A H \cong \mbb C W, T_w\mapsto w$, the group ring of $W$ over $\mbb C$. 
The morphism $\phi_v$ induces a map $\phi_v|_{v=1}: \mbb C\otimes_\A H_{S/e} \to \mbb C\otimes_\A H_S$. 
From (\ref{H1}), we see that 
\[
T_s^{-1}= T_s + v_s^{-1} - v_s. 
\]
By considering the action of $\phi_v|_{v=1}$ on $T_s'$ for $s\in S/e$, we see that this morphism $\phi_v|_{v=1}$ is 
$\phi$ under the identification $\mbb C\otimes_AH \cong \mbb C W$. 
In particular, we have 
$\phi_v|_{v=1} ( T'_w) = \phi(w)$. 
Now if $x=\sum_{w\in W_{S/e}} c_w T_w' \in \ker \phi_v$, where $c_w\in \A$,  then we must have 
$\phi_v|_{v=1} (x)=0$, which implies that 
\[
\sum_{w\in W_{S/e}} c_w(1)  \phi(w) =0. 
\]
But $\phi$ is injective, this implies that $c_w(1)=0$ for all $w\in W_{S/e}$.
This implies that $c_w/(v-1)\in \A$ for all $w\in W$. 
Assume that not all $c_w$ are zero. 
Now let $n$ be the largest integer such that $c_w/(v-1)^n\in \A$ for all $w$. Fix $w_2$ such that $\frac{c_{w_2}}{(v-1)^n} (1)\neq 0$. Consider
$x/(v-1)^n \in H_{S/e}$. It is clear that $x/(v-1)^n\in \ker \phi_v$ because $(v-1)^n$ is not a zero divisor in $H_{S}$. So we must have 
\[
\sum_{w\in W_{S/e}} \frac{c_w}{(v-1)^n} (1) \phi(w) =0, 
\]
which implies that $ \frac{c_{w_2}}{(v-1)^n} (1)=0$, a contradiction. Therefore $c_w=0$ for all $w\in W_{S/e}$, and $\phi_v$ is injective. 
The theorem is proved. 
\end{proof}

\subsection{Edge contraction along a linear branch, II}
\label{linear-branch-2}

Let $S'\subseteq S$ be a subset. Let $H_{S'}$ be the subalgebra of $H_S$ generated by $s' \in S'$. 
To each $w\in W_S$, we have an automorphism $\tau^v_w: H_S\to H_S $ defined by $T_{w'} \to T_wT_{w'} T^{-1}_w$ for all $w'\in W_s$. 
Let ``$\can$'' be the canonical embedding of $H_{S'}$ to $H_S$. 
There is a similar analogue of Proposition~\ref{tree} for Hecke algebras, whose proof is entirely similar and is hence skipped.

\begin{prop}
\label{tree-2}
Assume that  the condition (B) holds, we have the following commutative diagram. 
\[
\begin{CD}
H_{S-\{s_{j_n}\}} @> \can >> H_S\\
@V\tau^v VV@VV\tau^v_{s_{j_1}s_{j_2}\cdots s_{j_n}} V\\
H_{S/e} @>\phi_v >> H_S
\end{CD}
\]
where $\tau^v $ is an isomorphism defined by $\tau^v (T_s) = T'_{s} $ if $s\neq s_{j_\ell}$ for $0\leq \ell \leq n$ and 
$$\tau^v(T_{s_{j_\ell}})= 
\begin{cases}
T'_{s_{j_{\ell +1}}} & \mbox{if}\  \ell \geq 1,\\
T'_{s_{0}} & \mbox{if} \ \ell=0.
\end{cases}
$$
\end{prop}


\section{Extended affine Hecke algebras of type A}
\label{HA}
In this section, we show that the embedding $\phi_v$ can be generalized to extended affine Hecke algebras   of type $A$. 

\subsection{Iwahori-Matsumoto presentation}

The type-$A$ extended affine Hecke algebra $H^e_r$ of rank $r$ for $r>2$ admits the following Iwahori-Matsumoto presentation.
It is an associative algebra over $\A$ with generators $T_i$ for $i\in \mbb Z/r\mbb Z$ and $T^{\pm 1}_\rho$ subject to the following relations
\[
\begin{cases}
(T_i - v) (T_i + v^{-1})=0\\
T_i T_{i+1} T_i = T_{i+1} T_i T_{i+1} \\
T_i T_j = T_j T_i &\forall i -j \neq 1 (\mbox{mod} \ r)\\
T_\rho T_i = T_{i+1} T_\rho\\
T_\rho T_\rho^{-1} = T_\rho^{-1} T_\rho =1.
\end{cases}
\]
To avoid ambiguity, we put a check  on the generators of $H^e_{r+1}$. We have

\begin{prop}
\label{emb-HA}
Fix $i_-\in \mbb Z/r\mbb Z$. Let $i_+=i_- +1$. 
The following assignment defines an embedding $\phi^e_v: H^e_r\to H^e_{r+1}$. 
\begin{align}
&T_i \mapsto 
\begin{cases}
\check T_i & 1\leq i < i_-, \\
\check T_{i_-} \check T_{i_+} \check T^{-1}_{i_-} & i= i_-\\
\check T_{i+1} & i_+\leq i \leq r, \\
\end{cases}\\
&T_\rho \mapsto \check T^{-1}_{i_+} \check T_\rho.
\end{align}
\end{prop}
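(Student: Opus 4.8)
The plan is to mirror the strategy used for $\phi_v$ in Proposition~\ref{phi-H} and its injectivity theorem: first verify that the stated assignment respects all the defining relations of $H^e_r$ (so that $\phi^e_v$ is a well-defined algebra homomorphism), and then deduce injectivity by a specialization argument at $v=1$. For the well-definedness, I would split the relations into two groups. The relations not involving the index $i_-$ or the element $\rho$ hold automatically, since on those indices $\phi^e_v$ just relabels $\check T_i$ by a shift. The nontrivial checks are: (a) the quadratic relation for $T_{i_-}$, which follows as in Proposition~\ref{phi-H} by writing $(\check T_{i_-}\check T_{i_+}\check T_{i_-}^{-1}-v)(\check T_{i_-}\check T_{i_+}\check T_{i_-}^{-1}+v^{-1}) = \check T_{i_-}(\check T_{i_+}-v)(\check T_{i_+}+v^{-1})\check T_{i_-}^{-1}=0$; (b) the braid relations $T_{i_-}T_{i_-\pm 1}T_{i_-} = T_{i_-\pm 1}T_{i_-}T_{i_-\pm 1}$ and the commutations $T_{i_-}T_j=T_jT_{i_-}$ for $j$ far from $i_-$, which reduce, after conjugating by $\check T_{i_-}$, to braid/commutation relations among the $\check T$'s exactly as in the proof of Proposition~\ref{phi-H} (here one uses the identity $\check T_{i_-}\check T_{i_+}\check T_{i_-}^{-1}=\check T_{i_+}^{-1}\check T_{i_-}\check T_{i_+}$ to handle the neighbor on the other side); and (c) the relations involving $T_\rho$.

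The genuinely new part is group (c): one must check $T_\rho^{\pm 1}$ is sent to an invertible element (clear, since $\check T_{i_+}^{-1}\check T_\rho$ has inverse $\check T_\rho^{-1}\check T_{i_+}$), and that $\phi^e_v(T_\rho)\phi^e_v(T_i) = \phi^e_v(T_{i+1})\phi^e_v(T_\rho)$ for every $i\in\mbb Z/r\mbb Z$. I expect this to be the main obstacle, because the conjugation action of $\check T_\rho$ shifts indices in $H^e_{r+1}$ by one, and one has to track carefully how this interacts with the "defect" at $i_-$: the cases $i<i_--1$, $i=i_--1$, $i=i_-$, $i_+\le i<r$, and $i=r$ (which wraps around to $0$) must be treated separately, and in the boundary cases one needs the relation $\check T_\rho \check T_j = \check T_{j+1}\check T_\rho$ together with the quadratic relation to rewrite $\check T_{i_+}^{-1}\check T_\rho \check T_{i_-}\check T_{i_+}\check T_{i_-}^{-1}$ in the form $\check T_{i_-}\check T_{i_+}\check T_{i_-}^{-1}\check T_{i_+}^{-1}\check T_\rho$ and similarly at the wrap-around. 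This is bookkeeping, but it is where an error would most likely hide, so I would carry it out explicitly index-by-index.

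For injectivity I would again pass to $v=1$. Specializing $v\mapsto 1$ via $e_1:\A\to\mbb C$, the algebra $\mbb C\otimes_\A H^e_r$ becomes the group algebra of the extended affine symmetric group $\widehat{S}_r^e \cong \mbb Z\ltimes\widehat S_r$, and $\phi^e_v|_{v=1}$ becomes a group homomorphism which, on generators, sends the simple reflections to the conjugated reflections of Proposition~\ref{emb-HA} and $\rho$ to $s_{i_+}\rho$; this is precisely the extended-affine analogue of the embedding $\phi$ (combining $\phi^{\aff}$ with the correction on $\rho$), so it is injective on the group. Then the exact same valuation/filtration argument as in the injectivity theorem for $\phi_v$ applies verbatim: if $x=\sum c_w \check T_w \in\ker\phi^e_v$, divide by the highest power of $(v-1)$ dividing all $c_w$, specialize at $v=1$, and use injectivity of the group-level map together with the fact that $(v-1)$ is not a zero-divisor in $H^e_{r+1}$ to force all $c_w=0$. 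The only point requiring a line of justification is that $\phi^e_v|_{v=1}$ is indeed injective as a map of group algebras, i.e. that the images of the generators generate a copy of $\widehat S_r^e$ inside $\widehat S_{r+1}^e$; this follows either by exhibiting an explicit retraction $\psi$ (as in the proof of Proposition~\ref{W-edge}) or by invoking Proposition~\ref{W-edge} together with the observation that $\rho\mapsto s_{i_+}\rho$ intertwines the two $\mbb Z$-actions.
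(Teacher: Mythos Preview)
Your plan for well-definedness matches the paper almost exactly: the paper also observes that the quadratic and braid relations among the $T_i$ are already handled by Proposition~\ref{phi-H} (applied to the affine type~$A$ Coxeter system), so that only the relation $T_\rho T_i = T_{i+1}T_\rho$ requires new work, and then checks it case by case according to the position of $i$ relative to $i_-$. Your outline of that check is correct, including the use of $\check T_{i_-}\check T_{i_+}\check T_{i_-}^{-1}=\check T_{i_+}^{-1}\check T_{i_-}\check T_{i_+}$ at the boundary.

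For injectivity you take a genuinely different route. The paper argues in one line from the standard basis of $H^e_r$: once the homomorphism exists, the restriction to the (non-extended) affine Hecke subalgebra is the map $\phi_v$ of Proposition~\ref{phi-H} for the $\widehat A_{r-1}$ Coxeter system, already known to be injective, and the basis $\{T_w T_\rho^k\}$ then forces injectivity on all of $H^e_r$. Your specialization argument at $v=1$, reducing to injectivity of a group homomorphism between extended affine symmetric groups and then running the $(v-1)$-divisibility trick, is also correct and entirely self-contained; it does not need to invoke the earlier injectivity theorem separately for the affine Coxeter system, at the cost of having to justify injectivity on the extended group level (which, as you note, follows from Proposition~\ref{W-edge} together with the compatibility of $\rho\mapsto s_{i_+}\rho$ with the cyclic shift). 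The paper's route is shorter once one is willing to cite the basis and the earlier theorem; yours is more uniform with the rest of the paper's injectivity proofs.
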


\begin{proof}
We must show that the images of the generators of $H^e_r$ under $\phi^e_v$ satisfy the defining relations of $H^e_r$. 
All are known to hold except the fourth one. 
When $i< i_-$, we have
\[
\phi^e_v(T_\rho T_i ) = \phi^e_v (T_{i+1} T_\rho) =
\begin{cases}
\check T_{i+1} \check T^{-1}_{i_+} \check T_\rho & \mbox{if} \ i+1\neq i_-,\\
\check T^{-1}_{i_+} \check T_{i_-} \check T_\rho & \mbox{if} \ i+1 =i_-. 
\end{cases}
\]
When $i=i_-$, we have
\begin{align*}
\phi^e_v(T_\rho T_{i_-} ) 
&= \check T^{-1}_{i_+} \check T_\rho \check T_{i_-} \check T_{i_+} \check T^{-1}_{i_-}\\
&=\check T_\rho \check T_{i_+}\check T^{-1}_{i_-} \\
&= \check T_{i_++1} \check T^{-1}_{i_+} \check T_\rho\\
&= \phi^e_v(T_{i_+} T_\rho).
\end{align*}
When $i_+\leq i\leq r$, we have
\[
\phi^e_v(T_\rho T_i) = \check T^{-1}_{i_+} \check T_{i+2} \check T_\rho = \phi^e_v(T_{i+1} T_\rho). 
\]
Therefore, the fourth defining relation also holds, and so $\phi^e_v$ is an algebra homomorphism. 
Since $T_w T_\rho$ where $w$ runs over the symmetric group of $n$ letters forms a basis for $H^e_v$, we see that $\phi^e_v$ must be injective and thus the proposition is proved. 
\end{proof}

\subsection{Bernstein-Lusztig presentation}

Let $H^{BL}_r$ be an associative algebra over $\A$  defined by the generator-relation presentation  with generators
$T_i$ for all $1\leq i\leq r-1$ and $X_j$ for all $1\leq j\leq r$ and the defining relations.
\begin{align}
\begin{cases}
(T_i - v) (T_i + v^{-1}) =0  & 1\leq i\leq r-1,\\
T_i T_{i+1} T_i = T_{i+1} T_i T_{i+1} & 1\leq i\leq r-2,\\
T_i T_j = T_j T_i & |i-j| \geq 2,\\
T_i X_i T_i = X_{i+1}  & 1\leq  i\leq r-1,\\
T_i X_j = X_j T_i & j \neq i, i+1, 1\leq i\leq r-1. 
\end{cases}
\end{align}
It is well-known that $H^e_r$ is isomorphic to $H^{BL}_r$, and the defining relations of $H^{BL}_r$ is 
the so-called Bernstein-Lusztig presentation of $H^e_r$. An explicit isomorphism $\varphi_r: H^e_r\to H^{BL}_r$ is given by
\begin{align}
\label{IM-BL}
\begin{split}
& T_i \mapsto T_i,  \quad  \forall 1\leq i\leq r-1,\\
& T_\rho\mapsto (T_{r-1} T_{r-2}\cdots T_1X_1)^{-1}.
\end{split}
\end{align}
Note that since $T_r= T_\rho T_{r-1} T^{-1}_\rho$, we see that 
$T_r\mapsto X_1^{-1} T_1^{-1} \cdots T^{-1}_{r-2} T_{r-1} T_{r-2} \cdots T_1$. 
In light of Proposition~\ref{emb-HA}, we have an embedding
$\phi^{BL}_v= \varphi_{r+1} \phi^e_v \varphi^{-1}_r: H^{BL}_r\to H^{BL}_{r+1}$. 

To avoid ambiguity, we place a check on the generators of $H^{BL}_{r+1}$. 

\begin{prop}
\label{emb-BL}
Fix $i_-$ such that $1\leq i_-\leq r-1$. Let $i_+=i_-+1$. 
The embedding $\phi^{BL}_v: H^{BL}_r \to H^{BL}_{r+1}$ is defined by
\begin{align}
\begin{split}
& T_i \mapsto 
\begin{cases}
\check T_i & 1\leq i < i_-,\\
\check T_{i_-} \check T_{i_+}  \check T^{-1}_{i_-} & i=i_-,\\
\check T_{i+1} & i_+\leq i \leq r-1,
\end{cases}\\
&X_j \mapsto 
\begin{cases}
\check X_j + (v-v^{-1}) \check X_{i_+} \check T_j^{-1} \cdots \check T^{-1}_{i_--1} \check T^{-1}_{i_-} \check T^{-1}_{i_--1} \cdots \check T^{-1}_{j} & 1\leq j\leq i_-,\\
\check X_{j+1} & i_+\leq j\leq r. 
\end{cases}
\end{split}
\end{align}
\end{prop}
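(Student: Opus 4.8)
The plan is to verify Proposition~\ref{emb-BL} by a direct computation, tracing the composite $\phi^{BL}_v = \varphi_{r+1} \circ \phi^e_v \circ \varphi_r^{-1}$ through the explicit formulas already at hand: the isomorphism $\varphi_r$ of~(\ref{IM-BL}), its inverse, and the Iwahori--Matsumoto embedding $\phi^e_v$ of Proposition~\ref{emb-HA}. Since $\varphi_r$ is the identity on the $T_i$, $1 \le i \le r-1$, the formula for $T_i \mapsto \check T_i$, $\check T_{i_-}\check T_{i_+}\check T_{i_-}^{-1}$, or $\check T_{i+1}$ is immediate from Proposition~\ref{emb-HA}: one has $\varphi_r^{-1}(T_i) = T_i^{\mathrm{IM}}$, then $\phi^e_v$ sends it to the stated product of $\check T$'s (Iwahori--Matsumoto generators of $H^e_{r+1}$), and $\varphi_{r+1}$ carries these back to the same product of Bernstein--Lusztig $\check T$'s. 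So the content of the proposition is entirely in the formula for $X_j$.

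For the $X_j$ part, the strategy is to express $X_j$ in $H^{BL}_r$ in terms of Iwahori--Matsumoto generators, apply $\phi^e_v$, and re-expand in $H^{BL}_{r+1}$. First I would record that $X_1 = (T_{r-1}\cdots T_1)^{-1} T_\rho^{-1}$ in $H^{BL}_r$ (inverting~(\ref{IM-BL})), and that the relations $T_i X_i T_i = X_{i+1}$ give $X_{j} = T_{j-1}\cdots T_1 \, X_1 \, T_1 \cdots T_{j-1}$ for each $j$ — or, dually, $X_j = T_j^{-1}\cdots T_{r-1}^{-1}\, (\text{something in }T_\rho)$; choosing the representation that keeps indices on the ``small'' side $\le i_-$ will be the efficient one for $1 \le j \le i_-$. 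Applying $\phi^e_v$ then produces an element of $H^e_{r+1}$, which $\varphi_{r+1}$ turns into an expression in the $\check T$'s, $\check X_1$; the remaining task is to simplify this to $\check X_j$ (for $i_+ \le j \le r$, after the index shift) and to $\check X_j + (v-v^{-1})\check X_{i_+}\check T_j^{-1}\cdots \check T_{i_--1}^{-1}\check T_{i_-}^{-1}\check T_{i_--1}^{-1}\cdots \check T_j^{-1}$ (for $1 \le j \le i_-$). The appearance of the correction term is the signature of the fact that $\phi^e_v(T_\rho) = \check T_{i_+}^{-1}\check T_\rho$ twists the ``rotation'' that defines the lattice part, so that conjugating $\check X_1$ by the modified Coxeter word picks up a commutator; the factor $(v-v^{-1})$ comes from the quadratic relation $\check T_{i_-}^{-1} = \check T_{i_-} - (v - v^{-1})$ used to straighten $\check T_{i_-}\check T_{i_+}\check T_{i_-}^{-1}$ against $\check X_{i_+}$.

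I would organize the simplification around the key identity that in $H^{BL}_{r+1}$ one has $(\check T_{i_-}\check T_{i_+}\check T_{i_-}^{-1})\,\check X_{i_-}\,(\check T_{i_-}\check T_{i_+}\check T_{i_-}^{-1}) = \check X_{i_+}$ together with the ``Bernstein'' commutation formula governing how $\check T_i^{-1}$ moves past $\check X_i$ and $\check X_{i+1}$, namely $\check T_i^{-1}\check X_{i+1} = \check X_i \check T_i^{-1} + (v-v^{-1})\check X_{i+1}$ and $\check T_i^{-1} \check X_i = \check X_{i+1}\check T_i^{-1} - (v-v^{-1})\check X_{i+1}$ (derived from $T_i X_i T_i = X_{i+1}$ and the quadratic relation). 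Pushing the conjugating word $\check T_j^{-1}\cdots \check T_{i_--1}^{-1}\check T_{i_-}^{-1}\check T_{i_--1}^{-1}\cdots\check T_j^{-1}$ through $\check X_1$ step by step, the $\check X_1$ commutes with all $\check T_i^{-1}$ having $i \ge j > 1$ until the indices collide, and each collision at the central $\check T_{i_-}^{-1}$ contributes exactly one correction term proportional to $\check X_{i_+}$; a short induction on the distance $i_- - j$ then yields the closed form stated. The main obstacle, and the only place real care is needed, is bookkeeping: making sure the telescoping of the palindromic word $\check T_j^{-1}\cdots\check T_{i_-}^{-1}\cdots\check T_j^{-1}$ against $\check X_1$ produces a single correction term (not a growing sum) and that its coefficient is precisely $(v-v^{-1})$ with the displayed tail of inverse $\check T$'s — verifying the base case $j = i_-$ directly from $\varphi_{r+1}\phi^e_v\varphi_r^{-1}(X_{i_-})$ and then propagating. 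Once the formula is confirmed on generators, injectivity of $\phi^{BL}_v$ is automatic since it equals the conjugate $\varphi_{r+1}\phi^e_v\varphi_r^{-1}$ of the injective map $\phi^e_v$ from Proposition~\ref{emb-HA}.
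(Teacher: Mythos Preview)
Your plan is essentially the paper's: compute $\varphi_{r+1}\phi^e_v\varphi_r^{-1}$ explicitly on one $X_j$ and then propagate to the rest via the relation $\phi^{BL}_v(X_j)=\phi^{BL}_v(T_{j-1})\,\phi^{BL}_v(X_{j-1})\,\phi^{BL}_v(T_{j-1})$. The only organizational difference is that the paper takes $j=1$ as the base case and inducts upward (which is cleaner, since $\varphi_r^{-1}(X_1)=T_1^{-1}\cdots T_{r-1}^{-1}T_\rho^{-1}$ is the simplest word to push through $\phi^e_v$), whereas you suggest anchoring at $j=i_-$ and inducting on $i_--j$.

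Two of your auxiliary identities are misstated and should be corrected before you write this up. First, $(\check T_{i_-}\check T_{i_+}\check T_{i_-}^{-1})\,\check X_{i_-}\,(\check T_{i_-}\check T_{i_+}\check T_{i_-}^{-1})$ equals $\check T_{i_+}^{-1}\check X_{i_+}\check T_{i_+}$, not $\check X_{i_+}$; what you actually need at the crossover is the computation of $\phi^{BL}_v(T_{i_-})\,\phi^{BL}_v(X_{i_-})\,\phi^{BL}_v(T_{i_-})$, which the paper shows collapses to $\check X_{i_++1}$. Second, from $T_iX_iT_i=X_{i+1}$ and $T_i^{-1}=T_i-(v-v^{-1})$ one gets $\check T_i^{-1}\check X_{i+1}=\check X_i\check T_i^{-1}+(v-v^{-1})\check X_i$ and $\check T_i^{-1}\check X_i=\check X_{i+1}\check T_i^{-1}-(v-v^{-1})\check X_i$; in both cases the correction term carries $\check X_i$, not $\check X_{i+1}$. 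These are bookkeeping slips, not gaps in the strategy.
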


\begin{proof}
It is straightforward to check that the formula on $T_i$ is correct. 
To easy the burden of notations, we shall drop the check above the generators on $H^e_{r+1}$ and $H^{BL}_{r+1}$ in the proof.
We shall verify the formula on $X_j$ by induction on $j$. 
When $j=1$, we have
\begin{align*}
\phi^{BL}_v(X_1) &= \varphi_{r+1} \phi^e_v ( T^{-1}_1 T^{-1}_2 \cdots T^{-1}_{r-1} T^{-1}_\rho)\\
& = \varphi_{r+1}( T^{-1}_1\cdots T^{-1}_{i_--1} ( T_{i_-} T^{-1}_{i_+} T^{-1}_{i_-} ) T^{-1}_{i_+-1} \cdots T^{-1}_r (T^{-1}_\rho T_{i_+} ))\\
&= T^{-1}_1 \cdots T^{-1}_{i_--1} (T_{i_-} T^{-1}_{i_+} T^{-1}_{i_-} )  (T_{i_+} T_{i_-} T_{i_+}) T_{i_--1} \cdots T_1 X_1\\
&= T^{-1}_1 \cdots T^{-1}_{i_--1} T^2_{i_-} T_{i_--1} \cdots T_1 X_1\\
&= X_1 + (v-v^{-1} )  T^{-1}_1 \cdots T^{-1}_{i_--1} T_{i_-} T_{i_--1} \cdots T_1 X_1\\
&= X_1 + (v-v^{-1} )    T^{-1}_1 \cdots T^{-1}_{i_--1}   X_{i_+} T^{-1}_{i_1} T^{-1}_{i_--1} \cdots T^{-1}_1\\
&= X_1 + (v-v^{-1}) X_{i_+} T^{-1}_1 \cdots T^{-1}_{i_--1}    T^{-1}_{i_1} T^{-1}_{i_--1} \cdots T^{-1}_1 .
\end{align*}
So the formula on $X_j$ is correct when $j=1$. Assume that the formula holds for $j-1<i_-$, we have
\begin{align*}
\phi^{BL}_v(X_j) & = \phi^{BL}_v ( T_{j-1} X_{j-1} T_{j-1}) \\
& = T_{j-1} (X_{j-1} + (v-v^{-1}) X_{i_+} T^{-1} _{j-1}\cdots T^{-1}_{i_--1} T^{-1}_{i_-} T^{-1}_{i_--1} \cdots T^{-1}_{j-1} ) T_{j-1}\\
& = X_j + (v-v^{-1}) X_{i_+} T^{-1} _{j}\cdots T^{-1}_{i_--1} T^{-1}_{i_-} T^{-1}_{i_--1} \cdots T^{-1}_{j}.
\end{align*}
So the formula on $X_j$ holds for all $j\leq i_-$. 
For $j=i_+$, we have
\begin{align*}
\phi^{BL}_v(X_{i_+}) & = \phi^{BL}_v ( T_{i_-} X_{i_-} T_{i_-}) \\
&=(T_{i_-} T_{i_+} T^{-1}_{i_-} ) ( X_{i_-} + (v-v^{-1}) X_{i_+} T^{-1}_{i_-}) (T_{i_-} T_{i_+} T^{-1}_{i_-}) \\
&= T_{i_-} T_{i_+} T_{i_-} X_{i_-} T_{i_-} T_{i_+} T^{-1}_{i_-} - (v-v^{-1}) T_{i_-} T_{i_+} X_{i_-} T_{i_-} T_{i_+} T^{-1}_{i_-} \\
&\hspace{5cm} + (v-v^{-1}) T_{i_-} T_{i_+} T^{-1}_{i_-} X_{i_+} T_{i_+} T^{-1}_{i_-}\\
& = X_{i_++1}.
\end{align*}
So the formula on $X_{i_+}$ is correct. Now, assume that the formula on $X_j$ holds for   all $j-1>i_+$, then we have
\[
\phi^{BL}_v(X_{j}) = \phi^{BL}_v( T_{j-1} X_{j-1} T_{j-1}) = T_j X_j T_j = X_{j+1}.
\]
Therefore, the formula on $X_j$ holds for all $j\geq i_+$. The proposition is thus proved. 
\end{proof}

Note that the evaluation of  $\phi^{BL}_v$ on $X^{-1}_j$ has the following rule
\[
X^{-1}_j \mapsto 
\begin{cases}
X^{-1}_j - (v-v^{-1}) T_j \cdots T_{i_-} \cdots T_j X^{-1}_{i_+} & 1\leq j\leq i_-, \\
X^{-1}_{j+1} & i_+\leq j\leq r.
\end{cases}
\]
By a direction computation, we have

\begin{cor}
\label{emb-BL-c}
The application of $\phi^{BL}_v$ on $X_j X^{-1}_{j+1}$ and $X_j^{-1} X_{j+1}$ is given by
\[
X_j X^{-1}_{j+1} \mapsto 
\begin{cases}
X_j X^{-1}_{j+1} + (v-v^{-1}) T^{-1}_j \cdots T^{-1}_{i_--1} T_{i_-} T_{i_--1} \cdots T_j X_j X^{-1}_{j+1}  &\\
- (v-v^{-1}) T_{j+1} \cdots T_{i_-}\cdots  T_{j+1} X_j X^{-1}_{i_+} & \\
- (v-v^{-1})^2 T_{j+1} \cdots T_{i_-} \cdots T_j X_j X^{-1}_{i_+} & 1\leq j\leq i_--1,\\
X_{i_-} X^{-1}_{i_++1} + (v-v^{-1}) T_{i_-} X_{i_-} X^{-1}_{i_++1} & j= i_-,\\
X_{j+1} X^{-1}_{j+2} & i_+\leq j\leq r-1. 
\end{cases}
\]
\[
X^{-1}_j X_{j+1} \mapsto 
\begin{cases}
X^{-1}_j X_{j+1} - (v-v^{-1}) T_j \cdots T_{i_-} \cdots T_j X^{-1}_{i_+} X_{j+1} &\\
+ (v-v^{-1}) X^{-1}_j X_{i_+} T^{-1}_{j+1} \cdots T^{-1}_{i_-} \cdots T^{-1}_{j+1} &\\
- (v-v^{-1})^2 T^{-1}_{j+1} \cdots T^{-1}_{i_--1} T_{i_-} \cdots T_j & 1\leq j \leq i_--1,\\
X^{-1}_j X_{j+2} - (v-v^{-1}) T_j X^{-1}_{j+1} X_{j+2} & j= i_-,\\
X^{-1}_{j+1} X_{j+2} & i_+\leq j\leq r-1. 
\end{cases}
\]
\end{cor}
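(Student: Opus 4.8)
The plan is to derive both formulas directly from the rule for $\phi^{BL}_v$ on $X_j$ and $X_j^{-1}$ stated in Proposition~\ref{emb-BL} and the displayed formula just above the corollary, without introducing any new ideas; this is purely a multiplication-and-simplification exercise inside $H^{BL}_{r+1}$. First I would dispose of the easy ranges: for $i_+\leq j\leq r-1$ we have $X_j\mapsto \check X_{j+1}$ and $X_{j+1}\mapsto \check X_{j+2}$ (and likewise for inverses), so $\phi^{BL}_v$ is multiplicative on these products essentially by inspection, giving $\check X_{j+1}\check X^{-1}_{j+2}$ and $\check X^{-1}_{j+1}\check X_{j+2}$. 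Next I would handle the boundary index $j=i_-$: here $X_{i_-}\mapsto \check X_{i_-}+(v-v^{-1})\check X_{i_+}\check T^{-1}_{i_-}$ and $X_{i_+}\mapsto \check X_{i_++1}$, so one multiplies out $\bigl(\check X_{i_-}+(v-v^{-1})\check X_{i_+}\check T^{-1}_{i_-}\bigr)\check X^{-1}_{i_++1}$ and rewrites using $X_i\mapsto$-free manipulations, the point being to move $\check T^{-1}_{i_-}$ past $\check X^{-1}_{i_++1}$ (which commute, since $i_++1=i_-+2\neq i_-,i_-+1$) and to recognize $\check X_{i_+}\check X^{-1}_{i_++1}=\check X_{i_+}\check X^{-1}_{i_++1}$ together with the relation $\check T_{i_-}\check X_{i_-}\check T_{i_-}=\check X_{i_+}$ to produce the claimed single correction term $(v-v^{-1})\check T_{i_-}\check X_{i_-}\check X^{-1}_{i_++1}$; the $X^{-1}_{i_-}X_{i_+}$ case is the mirror computation.

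The genuine work is the generic range $1\leq j\leq i_--1$. Here one substitutes
\[
X_j\mapsto \check X_j+(v-v^{-1})\check X_{i_+}\,\check T^{-1}_j\cdots \check T^{-1}_{i_--1}\check T^{-1}_{i_-}\check T^{-1}_{i_--1}\cdots \check T^{-1}_j,
\qquad
X^{-1}_{j+1}\mapsto \check X^{-1}_{j+1}-(v-v^{-1})\check T_{j+1}\cdots \check T_{i_-}\cdots \check T_{j+1}\check X^{-1}_{i_+},
\]
and expands the product of two binomials into four terms. The constant term $\check X_j\check X^{-1}_{j+1}$ survives as is. The two single-$(v-v^{-1})$ cross terms need to be brought into the shape displayed in the corollary: in one of them I would use the commutation of $\check X_j$ with the braid-word in $\check T$'s (all indices $\geq j+1$, hence $\geq j+1 > j$, but one must check interactions at index $j+1$ — in fact $\check X_j$ commutes with $\check T_k$ for $k\neq j-1,j$, so it commutes through the whole word since $j+1>j$) to pull $\check X_j$ to the front, obtaining the term $-(v-v^{-1})\check T_{j+1}\cdots \check T_{i_-}\cdots \check T_{j+1}\check X_j\check X^{-1}_{i_+}$; in the other I would use $\check T_{i_-}X\check T$-relations to rewrite $\check X_{i_+}\check T^{-1}_j\cdots \check T^{-1}_{i_-}\cdots \check T^{-1}_j$ as $\check T^{-1}_j\cdots \check T^{-1}_{i_--1}\check T_{i_-}\cdots \check T_j\check X_j$ (conjugating $\check X_{i_+}$ back down to $\check X_j$), yielding $(v-v^{-1})\check T^{-1}_j\cdots \check T^{-1}_{i_--1}\check T_{i_-}\check T_{i_--1}\cdots \check T_j\,\check X_j\check X^{-1}_{j+1}$ after moving the surviving $\check X^{-1}_{j+1}$ back into position. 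Finally the genuinely quadratic term carries $(v-v^{-1})^2$ and a product of two braid-words in the $\check T$'s times $\check X_{i_+}\check X^{-1}_{i_+}$; here I would collapse $\check T^{-1}_j\cdots \check T^{-1}_{i_-}\cdots \check T^{-1}_j \cdot \check T_{j+1}\cdots \check T_{i_-}\cdots \check T_{j+1}$ using the braid relations (successive cancellations of the form $\check T^{-1}_k\check T_k=1$ telescoping the word) down to $\check T_{j+1}\cdots \check T_{i_-}\cdots \check T_j$, and absorb $\check X_{i_+}\check X^{-1}_{i_+}$ appropriately to land the last displayed term $-(v-v^{-1})^2\check T_{j+1}\cdots \check T_{i_-}\cdots \check T_j\,\check X_j\check X^{-1}_{i_+}$.

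The main obstacle is bookkeeping in this quadratic term: keeping track of which $\check X$'s and $\check T^{-1}_k\check T_k$ pairs cancel, in what order, and verifying that the residual braid-word matches the one in the statement rather than a cyclically-shifted or reversed variant. A clean way to organize it is to first prove an auxiliary identity of the form
\[
\check X_{i_+}\,\check T^{-1}_j\cdots \check T^{-1}_{i_--1}\check T^{-1}_{i_-}\check T^{-1}_{i_--1}\cdots \check T^{-1}_j \;=\; \check T^{-1}_j\cdots \check T^{-1}_{i_--1}\check T_{i_-}\check T_{i_--1}\cdots \check T_j\,\check X_j,
\]
which is exactly the inverse-conjugation statement dual to $X_j\mapsto T_{j}^{-1}\cdots$ and follows from iterating $\check T_k\check X_k\check T_k=\check X_{k+1}$; once this is in hand, the three nontrivial cross/quadratic terms reduce to formal rearrangements. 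The $X^{-1}_jX_{j+1}$ formula is entirely parallel — one expands $\bigl(\check X^{-1}_j-(v-v^{-1})\check T_j\cdots \check T_{i_-}\cdots \check T_j\check X^{-1}_{i_+}\bigr)\bigl(\check X_{j+1}+(v-v^{-1})\check X_{i_+}\check T^{-1}_{j+1}\cdots \check T^{-1}_{i_-}\cdots \check T^{-1}_{j+1}\bigr)$ and uses the same auxiliary identity and its inverse — so I would either write it out symmetrically or simply remark that it follows mutatis mutandis. Since the corollary explicitly says "by a direct computation," the expectation is that none of these steps requires cleverness, only care; I would therefore present the $j=1$-type base manipulations in the generic range in full and leave the telescoping of the braid-words as the one place to be careful.
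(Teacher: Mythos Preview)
Your plan is precisely the ``direct computation'' the paper invokes (the paper gives no further details), and your treatment of the ranges $i_+\le j\le r-1$ and $j=i_-$, as well as of the two linear cross terms in the generic range $1\le j\le i_--1$, is correct. Your auxiliary identity
\[
\check X_{i_+}\,\check T_j^{-1}\cdots \check T_{i_--1}^{-1}\check T_{i_-}^{-1}\check T_{i_--1}^{-1}\cdots \check T_j^{-1}
=\check T_j^{-1}\cdots \check T_{i_--1}^{-1}\check T_{i_-}\check T_{i_--1}\cdots \check T_j\,\check X_j
\]
is exactly right and is the key tool.

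There is one slip, in the quadratic term. The raw term is $-(v-v^{-1})^2\,\check X_{i_+}\,A\,B\,\check X_{i_+}^{-1}$ with $A=\check T_j^{-1}\cdots\check T_{i_-}^{-1}\cdots\check T_j^{-1}$ and $B=\check T_{j+1}\cdots\check T_{i_-}\cdots\check T_{j+1}$; the two $\check X$'s are \emph{not} adjacent, and $AB$ does \emph{not} telescope to $B\check T_j$ (already for $j=i_--1$ one finds $AB=\check T_{i_-}^{-1}\check T_{i_--1}^{-1}$, not $\check T_{i_-}\check T_{i_--1}$). The correct route is the one your own auxiliary identity suggests: apply it first to rewrite $\check X_{i_+}A=C\,\check X_j$ with $C=\check T_j^{-1}\cdots\check T_{i_--1}^{-1}\check T_{i_-}\check T_{i_--1}\cdots\check T_j$, commute $\check X_j$ past $B$, and then prove the braid identity $CB=B\check T_j$ (this does follow from iterated braid moves and a single cancellation $\check T_k^{-1}\check T_k=1$; the word $C$ carries inverses on its left half, which is what makes the cancellation work). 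That yields $-(v-v^{-1})^2 B\check T_j\,\check X_j\check X_{i_+}^{-1}$ on the nose. With this correction your outline goes through, and the $X_j^{-1}X_{j+1}$ case is indeed the parallel computation you describe.
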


\section{Affine Hecke algebras}
\label{AffH}
In this section, we present an embedding of affine Hecke algebras along a linear tree. 
We fix the matrix $K=(K_{s, s'})$ where $K_{s, s'} = 2\cos \frac{\pi}{m_{s, s'}}$ in this section.
We further assume that $-K$ is a Cartan matrix so that the group $W_S$ is a Weyl group. 
  
\subsection{Bernstein-Lusztig presentation}

In this section, 
we recall the affine Hecke algebras $H^{\aff}_S$ associated with an adjoint root system and equal parameter following~\cite{L89}.
As an associative algebra over $\A$, $H^{\aff}_S$ admits a generator-relation presentation as follows.
\begin{enumerate}
\item Generators: $T_s$, $\theta_s^{\pm 1}$ for $s\in S$;
\item Relations: $T_s$ satisfies the relations (\ref{H1}) and (\ref{H2});
\begin{align}
\theta_s \theta_s^{-1} & = \theta_{s}^{-1}\theta_s=1,\tag{H3}\\
\theta_s\theta_{s'} & = \theta_{s'} \theta_s,\tag{H4} \\
\theta_s T_{s'} & = T_{s'} \theta_{x_{s, s'}}  + (v- v^{-1}) \frac{\theta_s - \theta_{x_{s, s'}}}{1-\theta_{- s'}}, \tag{H5} 
\end{align}
where $x_{s, s'}=s+k_{s,s'}s'  $ and
 $\theta_{x_{s, s'}}= \theta_s \theta^{k_{s,s'}}_{s'}$. 
\end{enumerate}

The algebra $H^{\aff}_S$ is a quantum version of the group $W^{\aff}_{S, M}$. 
The generator $\theta_s$ is the generator attached to the simple root $\alpha_s$. 

\subsection{The embedding}
Assume that the condition (B) holds. 
Due to our assumption in this section, such a  condition can always be achieved. 
We have

\begin{prop}
Retaining the assumption in this section. 
We have the following embedding
\[
\phi^{\aff}_v: H^{\aff}_{S/e} \to H^{\aff}_S.
\]
defined by
\begin{align}
T_s &\mapsto 
\begin{cases}
T_{s_+} T_{s_-} T^{-1}_{s_+}, & s= s_0\\
T_s& s\neq s_0,
\end{cases}\\
\theta_{s} & \mapsto 
\begin{cases}
\theta_s - (v-v^{-1}) T_{s_{j_\ell}} \cdots T_{s_{j_1}} \cdots T_{s_{j_\ell}}   \theta^{-1}_{s_{j_1}} \cdots\theta^{-1}_{s_{j_{\ell-1}}}&\\
+(v-v^{-1}) \theta_{s_{j_\ell}} \cdots \theta_{s_{j_1}} T^{-1}_{s_{j_{\ell-1}}} \cdots 
T^{-1}_{s_{j_{1}}} \cdots T^{-1}_{s_{j_{\ell -1}}} &\\
- (v-v^{-1})^2 T^{-1}_{s_{j_{\ell-1}}} \cdots T^{-1}_{s_{j_2}} \cdots T_{s_{j_1}} \cdots T_{s_{j_\ell}}, 
& s=s_{j_\ell}, \ell \geq 2,\\
\theta_{s_+} \theta_{s_-} - (v-v^{-1}) T_{s_+} \theta_{s_-} & s=s_0, \\
\theta_s, &\mbox{o.w.}
\end{cases}\\
\theta^{-1}_s & \mapsto
\begin{cases}
\theta^{-1}_{s} + (v-v^{-1}) T^{-1}_{s_{j_\ell}} \cdots T^{-1}_{s_{j_2}} T_{s_{j_1}} \cdots T_{s_{j_\ell}} \theta^{-1}_{s_{j_\ell}}\\
-(v-v^{-1}) T_{s_{j_{\ell-1}}}  \cdots T_{s_{j_1}} \cdots T_{s_{j_{\ell-1}}} \theta^{-1}_{s_{j_{\ell}}}\cdots \theta^{-1}_{s_{j_1}} \\
-(v-v^{-1})^2 T_{s_{j_{\ell-1}}} \cdots T_{s_{j_1}} \cdots T_{s_{j_\ell}} \theta^{-1}_{s_{j_\ell}} \cdots
\theta^{-1}_{j_1}, 
& s=s_{j_\ell}, \ell \geq 2,\\
\theta^{-1}_{s_+} \theta^{-1}_{s_-} + (v-v^{-1}) T_{s_{i_-}} \theta^{-1}_{s_+} \theta^{-1}_{s_-},
& s= s_0,\\
\theta^{-1}_s, & \mbox{o.w.}
\end{cases}
\end{align}
\end{prop}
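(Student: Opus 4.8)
The plan is to realize $\phi^{\aff}_v$ as a conjugate of the canonical inclusion of a parabolic sub-affine-Hecke-algebra, in exact parallel with the square of Proposition~\ref{tree-2}. Put $u=s_{j_1}s_{j_2}\cdots s_{j_n}\in W_S$, write $T_u=T_{s_{j_1}}\cdots T_{s_{j_n}}\in H^{\aff}_S$, and let $\tau^v_u:H^{\aff}_S\to H^{\aff}_S$ be the conjugation automorphism $x\mapsto T_uxT_u^{-1}$ (an automorphism since each $T_s$ is invertible with $T_s^{-1}=T_s-(v-v^{-1})$ by (\ref{H1})). Let $\can:H^{\aff}_{S-\{j_n\}}\hookrightarrow H^{\aff}_S$ be the inclusion of the subalgebra generated by $T_s$ and $\theta_s^{\pm1}$ for $s\in S-\{j_n\}$; since the relations (\ref{H1}), (\ref{H2}), (H3)--(H5) restrict to these indices, this subalgebra is the affine Hecke algebra of the parabolic sub-root-system on $S-\{j_n\}$, and $\can$ is injective because $\{\theta_\lambda T_w\}$ with $\lambda\in\oplus_{s\in S}\mbb Z\alpha_s$, $w\in W_S$, is an $\A$-basis of $H^{\aff}_S$ whose subset indexed by $\lambda\in\oplus_{s\neq j_n}\mbb Z\alpha_s$, $w\in W_{S-\{j_n\}}$ is an $\A$-basis of the subalgebra. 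Finally let $\Theta:H^{\aff}_{S/e}\to H^{\aff}_{S-\{j_n\}}$ be the algebra isomorphism which on $T$-generators is the inverse of the isomorphism $\tau^v$ of Proposition~\ref{tree-2}, and on $\theta$-generators is governed by the lattice isomorphism $(\sigma^1)^{-1}$ introduced just before the Corollary to Proposition~\ref{tree}: $\theta_{s_0}\mapsto\theta_{s_-}$, $\theta_{s_{j_\ell}}\mapsto\theta_{s_{j_{\ell-1}}}$ for $2\le\ell\le n$, and $\theta_s\mapsto\theta_s$ otherwise. I would then prove
\[
\phi^{\aff}_v=\tau^v_u\circ\can\circ\Theta,
\]
which settles everything at once: $\Theta$ and $\tau^v_u$ are isomorphisms and $\can$ is injective, so $\phi^{\aff}_v$ is a well-defined injective algebra homomorphism.

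The remaining work is to verify this identity on the generators of $H^{\aff}_{S/e}$, and the explicit formulas in the statement are exactly what this verification returns. On $T$-generators the identity is the affine lift of Proposition~\ref{tree-2} (whose proof there is suppressed because it is identical); in particular $\phi^{\aff}_v(T'_{s_0})=\tau^v_u(T_{s_-})=T_{s_+}T_{s_-}T_{s_+}^{-1}$, the factors $T_{s_{j_k}}$ with $k\ge2$ commuting with $T_{s_-}$. For the $\theta$-generators the off-branch case is immediate, since each $T_{s_{j_k}}$ occurring in $T_u$ (note $k\ge1$) commutes with $\theta_s$ for $s$ off the branch (then $x_{s,s_{j_k}}=\alpha_s$ in (H5)). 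In the branch cases one computes $\tau^v_u(\theta_{s_-})$ and $\tau^v_u(\theta_{s_{j_{\ell-1}}})$: the factors $T_{s_{j_k}}$ with $k\ge m+2$ commute past $\theta_{s_{j_m}}$, so the conjugation collapses to one by the short word $T_{s_{j_1}}\cdots T_{s_{j_{m+1}}}$, which one then pushes past $\theta_{s_{j_m}}$ using (H5), simplifying at each step the numerator $(\theta_\lambda-\theta_{s(\lambda)})/(1-\theta_{-s'})$ (which always lies in the group algebra of $\oplus_{s\in S}\mbb Z\alpha_s$) together with $T_s^2=(v-v^{-1})T_s+1$. The base identity $T_{s_+}\theta_{s_-}T_{s_+}^{-1}=\theta_{s_+}\theta_{s_-}-(v-v^{-1})T_{s_+}\theta_{s_-}$ already yields $\phi^{\aff}_v(\theta_{s_0})$, and iterating the same move along the branch produces the $\ell\ge2$ formulas; the $\theta_s^{-1}$ formulas follow by inverting, e.g. $\phi^{\aff}_v(\theta_{s_0}^{-1})=(T_{s_+}\theta_{s_-}T_{s_+}^{-1})^{-1}=T_{s_+}\theta_{s_-}^{-1}T_{s_+}^{-1}$ and its branch analogues.

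The main obstacle is this last calculation: in the iterated conjugation $\tau^v_u(\theta_{s_{j_{\ell-1}}})=(T_{s_{j_1}}\cdots T_{s_{j_\ell}})\,\theta_{s_{j_{\ell-1}}}\,(T_{s_{j_1}}\cdots T_{s_{j_\ell}})^{-1}$ one must carry along the three families of correction terms --- of orders $0$, $1$, and $2$ in $v-v^{-1}$ --- through each application of (H5), and check that the telescoping of the rational expressions produces precisely the monomials listed in the proposition; this is mechanical but bookkeeping-heavy, and I would organize it as an induction on $\ell$ modelled on the induction on $j$ in the proof of Proposition~\ref{emb-BL}. A secondary point is the well-definedness of $\Theta$ as an algebra map, i.e. that the prescribed images satisfy (H3)--(H5); this reduces to the compatibility of $(\sigma^1)^{-1}$ with the reflection data, exactly as in the identity $\sigma_{s_+}\sigma_{s_-}\sigma_{s_+}|_{V_{S/e}}=\tilde\sigma_{s_0}$ from the proof of Proposition~\ref{W-edge}. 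Alternatively, one can bypass the intermediate algebra entirely: check directly that the images of the generators under the stated formulas satisfy (\ref{H1}), (\ref{H2}), (H3)--(H5) --- with (\ref{H1}) and (\ref{H2}) provided by Proposition~\ref{phi-H} --- and then deduce injectivity by the specialization $v\mapsto1$ as in the proof of the preceding theorem, since $\phi^{\aff}_v|_{v=1}$ recovers the injective affine Weyl-group embedding $\phi^{\aff}$.
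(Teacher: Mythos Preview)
Your approach is correct but takes a genuinely different route from the paper's. The paper's proof is a single sentence: the defining relations (H1)--(H5) involve at most two simple indices at a time, and under condition (B) each such local piece sits inside a type-$A$ subsystem, so the verification of the relations reduces to the type-$A$ formulas already computed in Corollary~\ref{emb-BL-c} (recall that $\theta_s$ corresponds to $X_jX_{j+1}^{-1}$ there). In other words, the paper treats Section~\ref{HA} as doing all the real work and invokes it by locality.

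Your factorization $\phi^{\aff}_v=\tau^v_u\circ\can\circ\Theta$ is more conceptual: it explains \emph{why} the map exists (as the affine lift of Proposition~\ref{tree-2}) and delivers injectivity for free, whereas the paper's terse proof addresses only well-definedness and leaves injectivity implicit. The trade-off is that to recover the explicit formulas in the statement you must still carry out the conjugation $\tau^v_u(\theta_{s_{j_{\ell-1}}})$ by hand, which is essentially the same induction as in the proof of Proposition~\ref{emb-BL}; so the bookkeeping is not avoided, only relocated. Your alternative at the end---check the relations directly and then specialize $v\mapsto 1$ to get injectivity from $\phi^{\aff}$---is closest in spirit to the paper, though the paper does not invoke specialization here either. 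Both arguments are sound; yours is more self-contained and makes the embedding claim fully explicit.
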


\begin{proof}
The defining relations are local and reduced to the affine type A case, which follows from Corollary~\ref{emb-BL-c}. 
\end{proof}

Let $(S^\aff, M^\aff)$ be affine Coxeter system associated with $(S, M)$. 
There is an isomorphism $H_{S^\aff}$ with $H^\aff_S$. It is expected that the embedding
$\phi_v$ on $H_{S^\aff/e}$ gets identified with $\phi^\aff_v$ on $H^\aff_{S/e}$ under the isomorphism $H_{S^\aff}\cong H^\aff_S$. 

Note that the algebra $H^{BL}_r$ is a quantization of the extended affine Weyl group of type A, i.e.,
$\mbb Z^n \rtimes S_n $. A generalization of $\phi^{BL}_v$ in Proposition~\ref{emb-BL} to other cases remains to be done.

\section{Schur-Jimbo duality and edge contractions}
\label{Schur}
In this section, we study the compatibility of the embeddings of Hecke algebras of type $A$ with the embeddings of quantum $\mathfrak{sl}_n$ in~\cite{Li22} (see also~\cite{Li23}). 

\subsection{Compatibility}

Let $\U_n$ be an associative algebra over $\mbb Q(v)$ defined by the generator-relation presentation.
\begin{enumerate}
\item Generators: $E_i, F_i, K_i^{\pm 1}$ for all $1\leq i\leq n-1$;

\item Relations:  for all $1\leq i, j\leq n$,
\begin{align*}
\begin{cases}
K_i K_j = K_j K_i, 
K_i K_i^{-1} =K_i^{-1} K_i=1,\\
K_i E_j = v^{2\delta_{ij} - \delta_{i, j+1} -\delta_{i, j-1} }E_j K_i,\\
K_i F_j = v^{-2\delta_{ij} + \delta_{i, j+1} + \delta_{i, j-1}} F_j K_i,\\
 E_i F_j - F_j E_i =\delta_{ij} \frac{K_i - K^{-1}_i}{v- v^{-1}}, \\
 E^2_i E_j - (v+v^{-1}) E_i E_j E_i + E_j E_i^2=0,  & \forall |i-j|=1,\\
E_iE_j - E_j E_i=0, & \forall |i-j|>1,\\
F^2_i F_j - (v+v^{-1}) F_i F_j F_i + F_j F_i^2=0,  &  \forall |i-j|=1,\\
F_iF_j-F_j F_i =0, &  \forall |i-j|>1,
\end{cases}
\end{align*}
\end{enumerate}
where $\delta_{ij}$ is the Kronecker delta. 
The algebra $\U_n$ is a quantum $\mathfrak{sl}_n$ and it is a Hopf algebra whose comultiplication is given by
\[
\Delta(E_i) = E_i \otimes K_i + 1\otimes E_i, \Delta(F_i) = F_i\otimes 1 + K^{-1}_i \otimes F_i, \Delta(K_i)=K_i \otimes K_i, 
\]
for all $1\leq i\leq n-1$. 
Let $\V_n$ be an $n$-dimensional vector space over $\mbb Q(v)$ with a fixed basis $\{ e_i| 1\leq i\leq n\}$. 
The natural $\U_n$-representation on $\V_n$ is given by 
\[
E_i .e_j = \delta_{i+1, j} e_i,
F_i .e_j =\delta_{i, j} e_{i+1},
K_i .e_j =v^{\delta_{i, j} - \delta_{i+1, j}} e_j,   
\]
for all $1\leq i\leq n-1$, $1\leq j\leq n$. 
Let $\T_{n, d}$ be the tensor space $\V_n^{\otimes d}$. 
The tensor space $\T_{n, d}$ carries a left $\U_n$-action via the comultiplication $\Delta$. 
We write $[r_1,\cdots, r_d]$ for the basis element $e_{r_1}\otimes \cdots \otimes e_{r_d}$ in $\T_{n, d}$. 
More precisely, the actions can be described explicitly as follows.
For all $1\leq i\leq n-1$, and $1\leq r_j\leq n$ for all $1\leq j\leq d$, we have
\begin{align}
\label{U-action}
\begin{cases}
E_i . [r_1,\cdots, r_d]  =
\sum_{1\leq p\leq d: r_p=i+1} v^{\#\{ j>p| r_j =i\} - \# \{ j>p| r_j = i+1\}} [ \cdots r_{p-1}, r_{p} -1, r_{p+1} \cdots],\\
F_i. [r_1,\cdots, r_d]  = 
\sum_{1\leq p\leq d: r_p = i} v^{\#\{ j< p| r_j =i+1\} - \# \{ j<p|r_j=i\}} [\cdots r_{p-1}, r_p +1, r_{p+1}\cdots],\\
K_i .[r_1,\cdots, r_d]   = v^{\# \{ j| r_j= i\}- \#\{j| r_j= i+1\}} [r_1,\cdots, r_d]. 
\end{cases}
\end{align}

Consider the  Hecke algebra $H^{A_{d-1}}_S$ of type $A_{d-1}$ where $S=\{1, 2, \cdots, n-1\}$ and the Coxeter matrix $M$ is given by 
$m_{s,s}=1$, $m_{s, s'}= 3 $ if $|s-s'|=1$ and $m_{s, s'}=0$, otherwise.
Let $\H_d= \mbb Q(v)\otimes_\A H^{A_{d-1}}_S$.   
The tensor space $\T_{n,d}$ carries a right $\H_d$-action as follows. For all $1\leq i\leq n-1$
\begin{align}
\label{H-action}
[r_1,\cdots, r_d] . T_i = 
\begin{cases}
[\cdots r_{i+1}, r_i \cdots], & \mbox{if}\ r_i < r_{i+1},\\
v[\cdots r_i r_{i+1} \cdots], & \mbox{if} \ r_i=r_{i+1},\\
(v-v^{-1}) [\cdots r_i r_{i+1}\cdots] + [\cdots r_{i+1} r_i\cdots], &\mbox{if} \ r_i> r_{i+1}.
\end{cases}
\end{align}
It is known that the $\U_n$-action and $\H_d$-action on $\T_{n, d}$ commute and hence $\T_{n,d}$ carries a $\U_n-\H_d$-bimodule structure. Moreover, the triple $(\U_n, \T_{n, d}, \H_d)$ satisfies the double centralizer property, i.e., the Schur-Jimbo duality. 

Now fix a pair $(i_+, i_-)$ such that $i_-=i_+ + 1$ and $1\leq i_+< i_-\leq n-1$. 
Let $\phi_v: \H_d\to \H_{d+1}$ be the embedding associated to $(i_+, i_-)$ defined in Proposition~\ref{phi-H}. 
Fix $\ve\in \{\pm 1\}$. Let $\phi_\ve: \U_n\to \U_{n+1} $ be the finite analogue of the embedding $\Phi_{r, \ve}$ in~\cite{Li22}. In particular, it is defined by the following assignment.
\begin{align*}
& E_i \mapsto 
\begin{cases}
E_i & \mbox{if} \ 1\leq i< i_+, \\
E_i E_{i+1} - v^{\ve} E_{i+1} E_{i} & \mbox{if} \ i= i_+,\\
E_{i+1} & \mbox{if} \ i> i_+,
\end{cases}\\
& F_i \mapsto 
\begin{cases}
F_i & \mbox{if} \ 1\leq i< i_+,\\
F_{i+1} F_i - v^{-\ve} F_i F_{i+1} & \mbox{if} \ i = i_+, \\
F_{i+1} & \mbox{if} \ i> i_+,
\end{cases}\\
& K_i \mapsto
\begin{cases}
K_i & \mbox{if} \ 1\leq i< i_+, \\
K_i K_{i+1} & \mbox{if} \ i = i_+,\\
K_{i+1} & \mbox{if} \ i> i_+. 
\end{cases}
\end{align*}

Let $\phi^\T: \T_{n, d}\to \T_{n+1, d+1}$ be a $\mbb Q(v)$-linear map defined by  the rule
\[
[r_1,\cdots, r_d] \mapsto [\breve r_1, \cdots \breve r_{i_+}, i_-, \breve r_{i_-} \cdots, \breve r_d] 
\]
where 
$$\breve r_i =\begin{cases}
r_i, &\mbox{if}\ r_i\leq i_+\\
r_i+1 &\mbox{if} \ r_i > i_+
\end{cases}
$$
\begin{prop}
The map $\phi^\T$ is compatible with the bimodule structures. 
More precisely, we have the following commutative diagram.
\[
\begin{CD}
\U_n\times \T_{n,d} @>>> \T_{n, d} \\
@V\phi_\ve\times \phi^\T VV @VV\phi^\T V  \\
\U_{n+1} \times \T_{n+1, d+1} @>>> \T_{n+1, d+1} 
\end{CD}
\]
where the horizontal maps are respective actions. 
\end{prop}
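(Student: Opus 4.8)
The plan is to verify the commutativity of the diagram by checking it on each generator $E_i, F_i, K_i$ of $\U_n$ acting on an arbitrary basis element $[r_1,\dots,r_d]$ of $\T_{n,d}$, comparing the two ways around the square. That is, for each generator $g$ of $\U_n$, I would compute $\phi^\T(g.[r_1,\dots,r_d])$ using the $\U_n$-action formulas \eqref{U-action} followed by the definition of $\phi^\T$, and compare it with $\phi_\ve(g).\phi^\T([r_1,\dots,r_d])$, using the definition of $\phi_\ve$ and then the $\U_{n+1}$-action formulas on $\T_{n+1,d+1}$.

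The key organizational point is that the insertion map $\phi^\T$ does two things: it relabels entries $r_i > i_+$ by $r_i+1$ (shifting the weight lattice indices past the contracted node), and it inserts a new entry equal to $i_-$ in position $i_+$ of the tensor word (wait—more precisely it inserts after position $i_+$; in any case it lengthens the word by one with a fixed value). So the cases to run through are: (i) $K_i$ for $i < i_+$, $i > i_+$, and $i = i_+$ — here $\phi_\ve(K_{i_+}) = K_{i_+}K_{i_-}$, and one checks the $v$-exponent $\#\{j: r_j = i_+\} - \#\{j: r_j = i_++1\}$ matches $\#\{j: \breve r_j \text{ or new} = i_+\} - \#\{j: = i_-\} + (\#\{= i_-\} - \#\{= i_-+1\})$, the inserted $i_-$ cancelling appropriately; (ii) $E_i$ and $F_i$ for $i \ne i_+, i_+\pm$—routine since the relabelling is order-preserving and the inserted entry $i_-$ is not touched by $E_i/F_i$ unless $i \in \{i_+, i_-\}$; (iii) the critical case $i = i_+$, where $\phi_\ve(E_{i_+}) = E_{i_+}E_{i_-} - v^\ve E_{i_-}E_{i_+}$, and one must show that the action of this $q$-commutator on the inserted-word side reproduces the single operator $E_{i_+}$ acting on the original word, after relabelling. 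The exponent bookkeeping in the $E$/$F$ formulas (the $\#\{j>p\}$ or $\#\{j<p\}$ counts) is where the inserted $i_-$ at position $i_+$ must be accounted for carefully.

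The main obstacle, as expected, will be case (iii): verifying that $(E_{i_+}E_{i_-} - v^\ve E_{i_-}E_{i_+})$ acting on $\phi^\T([r_1,\dots,r_d])$ equals $\phi^\T(E_{i_+}.[r_1,\dots,r_d])$. The point is that an entry $r_p = i_+ + 1$ in the original word becomes $\breve r_p = i_+ + 2 = i_- + 1$ in the new word, and lowering it by $1$ under $E_{i_+}$ in the original corresponds to lowering $i_-+1 \mapsto i_- \mapsto i_+ = i_-$... no: it must be lowered twice, $i_-+1 \to i_- \to i_+$, which is exactly what the composite $E_{i_+}E_{i_-}$ does (first $E_{i_-}$ sends $i_-+1 \to i_-$, then $E_{i_+}$ sends $i_- \to i_+$), while $E_{i_-}E_{i_+}$ in the other order cannot act on an entry equal to $i_-+1$ until the $i_-$-colored entries have been created, so the two terms act on disjoint or overlapping-with-sign configurations. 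The $v^\ve$ coefficient is precisely tuned so that the spurious contributions of $E_{i_+}E_{i_-}$ acting on the freshly inserted $i_-$ (which would produce an entry $i_+$ where the original had nothing) are cancelled by $v^\ve E_{i_-}E_{i_+}$, and the surviving terms carry exactly the $v$-exponents prescribed by \eqref{U-action} on $\T_{n+1,d+1}$, matching those from \eqref{U-action} on $\T_{n,d}$ after the relabelling shift. I would handle $F_{i_+}$ symmetrically (using the other order $F_{i_-}F_{i_+} - v^{-\ve}F_{i_+}F_{i_-}$ and the inserted entry, where $F_{i_+}$ raises an $i_+$ entry to $i_++1 = i_-$, which must then not be raised further — the inserted $i_-$ serving as the "receiver"), and $K_{i_+}$ as the straightforward weight computation above. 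Since all the relevant relations and module structures are local in the tensor factors, it suffices to treat the essential patterns on words of length two or three around the relevant positions, which keeps the computation finite and manageable.
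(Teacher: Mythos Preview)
Your approach—direct verification on generators using the explicit action formulas \eqref{U-action}—is correct and is essentially what the paper's proof points to, though the paper's version is much terser: it simply cites Theorem~2.1.1 of \cite{Li22} and singles out one key ingredient, namely that $E_{i_-}E_{i_+}$ and $F_{i_+}F_{i_-}$ act as zero on the natural representation $\V_{n+1}$. This vanishing is precisely what guarantees that the \emph{diagonal} terms of $\Delta^{(d)}(\phi_\ve(E_{i_+}))$ (both operators hitting the same tensor slot) produce only the two-step lowering $e_{i_-+1}\mapsto e_{i_+}$, which under $\phi^\T$ corresponds to the single-step action of $E_{i_+}$ on the original word, while the wrong-order diagonal contribution from $E_{i_-}E_{i_+}$ drops out. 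Your cancellation of the ``spurious'' off-diagonal terms (where $E_{i_+}$ hits the inserted $i_-$-slot rather than the slot just lowered by $E_{i_-}$) is the complementary piece of the same computation; it is tractable exactly because, as you observe, the image of $\phi^\T$ contains a unique $i_-$-entry and all other entries are either $\le i_+$ or $\ge i_-+1$. So the two arguments are really the same verification: the paper compresses it to a citation plus one structural observation on $\V_{n+1}$, while you unpack it at the level of tensor words. Packaging the key step as ``$E_{i_-}E_{i_+}=0$ on $\V_{n+1}$'' is cleaner and explains conceptually why the computation closes up, but your more explicit route has the virtue of being self-contained.
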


\begin{proof}
This has been observed in the proof of Theorem 2.1.1 in~\cite{Li22}. 
The main ingredient in that proof is that $E_{i_-}E_{i_+}$ and $F_{i_+} F_{i_-}$ act trivially on $\V_{n+1}$.
\end{proof}

Define a map $\phi^\T_1: \T_{n, d} \to \T_{n+1, d+1}$ be the $\mbb Q(v)$-linear map defined by the rule
\[
[\breve r_1,\cdots, \breve r_d] \mapsto [\breve r_1, \cdots, \breve r_{i_+}, n+1, \breve r_{i_-}, \cdots, \breve r_d]. 
\]

\begin{prop}
We have the following commutative diagram.
\[
\begin{CD}
\T_{n, d} @<<< \T_{n, d} \times \H_d\\
@V\phi^\T_1 VV @VV\phi^\T_1\times \phi_v V \\
\T_{n+1, d+1} @<<< \T_{n+1, d+1} \times \H_{d+1}.
\end{CD}
\]
\end{prop}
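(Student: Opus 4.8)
The plan is to verify the square on the generators of $\H_d$. Since $\phi^\T_1$ is $\mbb Q(v)$-linear and $\H_d$ is generated by the $T'_s$ with $s\in S/e$, it suffices to prove
\[
\phi^\T_1\big([r_1,\ldots,r_d]\cdot T'_s\big)=\phi^\T_1\big([r_1,\ldots,r_d]\big)\cdot \phi_v(T'_s)
\]
for every such $s$ and every tuple $(r_1,\ldots,r_d)$ with $1\le r_j\le n$. Both sides are \emph{local}: by \eqref{H-action} the operator $T'_s$ alters only the entries in the two tensor slots it acts on, while by Proposition~\ref{phi-H} the element $\phi_v(T'_s)$ is $T_s$ when $s\ne s_0$ and the length-three word $T_{i_+}T_{i_-}T_{i_+}^{-1}$ when $s=s_0$ is the contracted vertex, each of which moves only a window of two or three consecutive slots of $\T_{n+1,d+1}$. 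Reading $\phi^\T_1$ as ``relabel every entry by $r\mapsto\breve r$ and then splice the extra factor $e_{n+1}$ into slot $i_+ + 1$'', the original slots $1,\ldots,i_+$ are carried to slots $1,\ldots,i_+$, the original slots $i_+ + 1,\ldots,d$ to slots $i_+ + 2,\ldots,d+1$, and slot $i_+ + 1$ carries $e_{n+1}$.

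I would then split into the cases dictated by the location of $s$ relative to the splice. For $s$ below $s_0$ one has $\phi_v(T'_s)=T_s$, and the active window lies strictly to the left of the spliced factor; since $r\mapsto\breve r$ is order-preserving, the three branches of \eqref{H-action} are respected and the two sides agree entry by entry. For $s$ above $s_0$ the same reasoning applies, the active window now lying strictly to the right of slot $i_+ + 1$. The one substantive case is $s=s_0$, where $\phi_v(T'_{s_0})=T_{i_+}T_{i_-}T_{i_+}^{-1}$ acts on slots $i_+,i_+ + 1,i_+ + 2$ of $\T_{n+1,d+1}$ — precisely the images of the original slots $i_+,i_+ + 1$ with the spliced factor $e_{n+1}$ wedged between them. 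Here I would compute directly from \eqref{H-action}, with $a=r_{i_+}$, $b=r_{i_+ + 1}$ and window entries $(\breve a,\, n+1,\, \breve b)$: apply $T_{i_+}$, then $T_{i_-}$, then $T_{i_+}^{-1}$, expanding the last factor via $T_{i_+}^{-1}=T_{i_+}-(v-v^{-1})$ coming from \eqref{H1}, and using that $e_{n+1}$ carries the maximal label of $\V_{n+1}$ so that transporting it leftward always falls into the straight-swap branch of \eqref{H-action}. The conjugation $T_{i_+}(\,\cdot\,)T_{i_+}^{-1}$ of the inner swap $T_{i_-}$ should collapse, in each of the branches $\breve a<\breve b$, $\breve a=\breve b$, $\breve a>\breve b$, to the outcome of first acting on the entries $(a,b)$ by $T'_{s_0}$ via \eqref{H-action} and then splicing $e_{n+1}$ — which is exactly the left-hand side. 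This is the finite type-$A$ shadow of the local computations already used in Section~\ref{AffH}; only the quadratic and braid relations among the $T$'s enter here, so the $X$-variables of Corollary~\ref{emb-BL-c} play no role.

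The expected main obstacle is the bookkeeping of the $s=s_0$ case: keeping straight which adjacent pair of slots $T_{i_+}$ and $T_{i_-}$ act on after the splice, correctly expanding $T_{i_+}^{-1}$, and confirming the collapse in every order-branch, including the degenerate sub-cases in which $\breve a$ or $\breve b$ equals $n+1$. Nothing conceptually deeper is anticipated, since all relations in play are supported on at most three consecutive tensor factors, so the entire verification localizes to that three-slot window and reduces to identities in the Hecke algebra of $S_3$.
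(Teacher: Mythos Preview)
Your proposal is correct and follows essentially the same route as the paper: reduce to generators, dispose of $s\ne s_0$ by locality plus the fact that $r\mapsto\breve r$ is order-preserving, and for $s=s_0$ compute the action of $T_{i_+}T_{i_-}T_{i_+}^{-1}$ on the three-slot window in the three branches $a<b$, $a=b$, $a>b$. The paper packages this last step by working out $(\cdot)\,T_{i_+}T_{i_-}T_{i_+}$ and $(\cdot)\,T_{i_+}T_{i_-}$ separately and then subtracting, which is exactly your expansion $T_{i_+}^{-1}=T_{i_+}-(v-v^{-1})$ in different bookkeeping.

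One small remark: the degenerate sub-case you flag, where a neighbouring entry equals $n+1$, does not arise for the map the paper is actually using. In the proof the window entries are written $r_{i_+},r_{i_-}\in\{1,\dots,n\}$ (no breves, despite the displayed definition of $\phi^\T_1$), so the spliced label $n+1$ is strictly maximal and the first application of $T_{i_+}$ is always a straight swap. You may safely drop that worry.
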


\begin{proof}
It is enough to show that it is commutative when applying the generator $T_{i_0}$. But this can be checked by a direct computation as follows. 
For $1\leq r_{i_+} < r_{i_-} \leq n$, we have
\begin{align*}
\begin{split}
& [\cdots r_{i_+}, n+1, r_{i_-}\cdots] T_{i_+} T_{i_-} T_{i_+} = (v-v^{-1}) [\cdots n+1, r_{i_-}, r_{i_+}\cdots] + [\cdots r_{i_-} , n+1, r_{i_+}\cdots], \\
& [\cdots r_{i_+}, n+1, r_{i_-}\cdots] T_{i_+} T_{i_-}  = (v-v)^{-1} [\cdots n+1, r_{i_-} , r_{i_+} \cdots].
\end{split}
\end{align*}
This yields that 
\begin{align}
\label{comm-1}
[\cdots r_{i_+}, n+1, r_{i_-}\cdots] T_{i_+} T_{i_-} T_{i_+}^{-1} = [\cdots r_{i_-} , n+1, r_{i_+}\cdots],\ \mbox{if} \ r_{i_+}< r_{i_-}. 
\end{align}
If $r_{i_+} =r_{i_-}$, we have
\begin{align*}
\begin{split}
& [\cdots r_{i_+}, n+1, r_{i_-}\cdots] T_{i_+} T_{i_-} T_{i_+} = v(v-v^{-1}) [\cdots n+1, r_{i_-} , r_{i_+} \cdots] + v[\cdots r_{i_-} ,n+1, r_{i_+}\cdots],\\
&  [\cdots r_{i_+}, n+1, r_{i_-}\cdots] T_{i_+} T_{i_-}= v[\cdots n+1, r_{i_-} r_{i_+}\cdots] . 
\end{split}
\end{align*}
So we have 
\begin{align}
\label{comm-2}
[\cdots r_{i_+}, n+1, r_{i_-}\cdots] T_{i_+} T_{i_-} T_{i_+}^{-1} = v[\cdots r_{i_-}, n+1, r_{i_+}\cdots], \ \mbox{if} \ r_{i_+}=r_{i_-}. 
\end{align}
If $r_{i_+} > r_{i_-}$, we have 
\begin{align*}
 [\cdots r_{i_+}, n+1, r_{i_-}\cdots] T_{i_+} T_{i_-} T_{i_+} = (v-v^{-1})^2 [\cdots n+1, r_{i_+} , r_{i_-} \cdots] \\
 + (v-v^{-1}) [\cdots r_{i_+}, n+1, r_{i_-} \cdots]\\
+  (v-v)^{-1} [\cdots n+1, r_{i_-} , r_{i_+}\cdots] + [\cdots r_{i_-} , n+1, r_{i_+}\cdots],\\
 [\cdots r_{i_+}, n+1, r_{i_-}\cdots] T_{i_+} T_{i_-} = (v-v^{-1}) [\cdots n+1, r_{i_+} , r_{i_-}\cdots] + [\cdots n+1, r_{i_-}, r_{i_+}\cdots]. 
\end{align*}
Then we have
\begin{align}
\label{comm-3}
\begin{split}
[\cdots r_{i_+}, n+1, r_{i_-}\cdots] T_{i_+} T_{i_-} T_{i_+}^{-1}= 
(v-v^{-1}) [\cdots r_{i_+}, n+1, r_{i_-}\cdots]\\ + [\cdots r_{i_-} , n+1, r_{i_+}\cdots], \mbox{if} \ r_{i_+}> r_{i_-}.
\end{split}
\end{align}
By (\ref{comm-1})-(\ref{comm-3}), we see that the right square in the above diagram commutes when applies $T_{i_0}$. This finishes the proof. 
\end{proof}

\begin{rem}
Recently Ueda established an embedding from affine Yangian of $\mathfrak{sl}_n$ to its higher rank in~\cite{U23}. The embedding is shown to be compatible with
the degenerate affine Schur-Weyl duality~\cite{U24}. It is interesting to compare his work with us. 
\end{rem}

\section{Appendix: Edge contraction along a labelled edge}

In this appendix, we study the effect of edge contraction on a labelled edge in a Coxeter graph.
This study is suggested by the anonymous referee. 

\subsection{}
Recall the  Coxeter system $(W_S, S)$ associated with the Coxeter matrix $M$ from Section~\ref{Coxeter}. 
Let $K=(K_{s, s'})_{s, s'\in S}$ be the matrix defined by
$K_{s, s'} = 2 \cos \frac{\pi}{m_{s, s'}}$. 
In this appendix, we assume that the edge $e=\{s_+, s_-\}$ in the Coxeter graph is a labelled edge, i.e.,
\begin{align}
\label{lab}
m_{s_+, s_-} \geq 4. 
\end{align}
We can still consider the subgroup of $W_S$ generated by the reflections
$s\in S-\{s_{\pm}\}$ and $s_+s_-s_+$. By the works~\cite{Dy87},~\cite{De89} of Dyer and Deodhar, we know that it is a Coxeter group.
In this section, we shall determine its type, which can be done by the following operation on the original Coxeter matrix. 
Note that there is an algorithm in~\cite{Dy87} and~\cite{De89} in determining the Coxeter type as well.

Recall $S/e= S-\{ s_+, s_-\} +\{s_0\}$. 
We define a Coxeter matrix $\tilde N= (\tilde n_{s, s'})_{s, s' \in S/e}$ by the following rule. For any $s\neq s'$, we
define
\begin{align}
\tilde n_{s, s'} =
\begin{cases}
m_{s, s'} &\mbox{if} \ s, s'\neq s_0,\\
m_{s, s_-} & \mbox{if}\ s'=s_0, m_{s, s_+} = 2,\\
m_{s_+, s_-} & \mbox{if} \ s'=s_0, m_{s, s_+} =3, m_{s, s_-}=2,\\
\infty &\mbox{if} \ s'=s_0, m_{s,s_+} =3, m_{s, s_-}\geq 3,\\
\infty &\mbox{if}\ s'=s_0, m_{s, s_+} \geq 4.
\end{cases}
\end{align}
The undefined entries in $\tilde N$ can be determined by the fact that $\tilde N$ is symmetric.

Here is an example in terms of Coxeter graphs.
\[
\xymatrix{
&j \ar@{-}[dr] && i \ar@{-}[dl] \ar@{-}[dr] &&\\
k \ar@{-}[rr]^{m'} && s_+\ar@{-}[rr]^m & &s_- \ar@{-}[r]^l & z\\
&&M&&
}
\xymatrix{
& j \ar@{-}[dr]^m& i \ar@{-}[d]^\infty&\\
\mapsto& k \ar@{-}[r]^\infty&s_0\ar@{-}[r]^l &z\\
&&\tilde N&
}
\]
In light of this example, we call the Coxeter matrix $\tilde N$ the edge contraction of $M$ along the arrow $\overrightarrow {e}=s_+\to s_-$. 

Let  $(W_{S/e, \tilde N}, S/e)$ be the Coxeter system associated with $\tilde N$. The following is 
the counterpart of Proposition~\ref{W-edge}.

\begin{prop}
\label{W-edge-2}
There is a group embedding $\tilde \phi: W_{S/e, \tilde N} \to W_{S, M}$ defined by 
\begin{align}
\tilde \phi (s) =
\begin{cases}
s &\mbox{if} \ s\neq s_0,\\
s_+ s_- s_+ & \mbox{if} \ s= s_0.
\end{cases}
\end{align}
\end{prop}

\begin{proof}
The fact that $\tilde \phi$ is a group homomorphism can be checked in a straightforward way.
Actually, by definition, only the following two cases are nontrivial to check.
The first case is when $s'=s_0$ and $m_{s, s_+}=2$. In this case, we have the following computation.
\begin{align}
\begin{split}
(\tilde \phi(s) \tilde \phi(s'))^{n_{s,s'}} & = 
(s (s_+s_-s_+))^{n_{s,s'}}\\
&= (s_+ ss_- s_+)^{m_{s, s_-}}\\
& = s_+ (ss_-)^{m_{s,s_-}}s_+\\
& = 1
\end{split}
\end{align}
The second case is when $s'=s_0$, $m_{s, s_+}=3$, $m_{s, s_-}=2$.
In the case, we have the following computation.
\begin{align}
\begin{split}
(\tilde \phi(s) \tilde \phi(s'))^{n_{s,s'}} & = 
(s (s_+s_-s_+))^{n_{s,s'}}\\
&=(s s_+s_-s_+) (s s_+s_-s_+) \cdots (s s_+s_-s_+)  \hspace{2.3cm} \mbox{($m_{s_+,s_-}$ parentheses)}\\
&=s_+  (s s_+s) s_- (ss_+s) s_- ( ss_+s) \cdots (ss_+s) s_- s_+ \hspace{1cm} \mbox{($m_{s_+,s_-}$ parentheses)}\\
&=s_+ s (s_+s_-)^{m_{s_+,s_-}} s s_+\\
& =1
\end{split}
\end{align}
This shows that $\tilde \phi$ is a group homomorphism.

It remains to prove the injectivity of $\tilde \phi$. The proof goes in a similar manner as that of Proposition~\ref{W-edge}. 
We shall highlight the places where we need to modify. 
Define the symmetric matrix $\hat K=(\hat K_{s,s'})_{s, s'\in S/e}$ by
\begin{align}
\hat K_{s, s'}
=
\begin{cases}
K_{s, s'} &\mbox{if} \ s, s' \neq s_0,\\
K_{s, s_-} + K_{s, s_+} K_{s_+, s_-} &\mbox{if} \ s=s_0, s'\neq s_0. 
\end{cases}
\end{align}
Then $\hat K_{s,s'}$ is subject to the conditions in (\ref{K}) for $\tilde N$. 

Indeed, when $s'=s_0$ and $m_{s, s_+}=2$, we have $K_{s, s_+}=0$ and 
\[
\hat K_{s, s'}= K_{s, s_-}.
\]
When $s'=s_0$ and $m_{s, s_+}=3$ and $m_{s, s_-}=2$, we have $K_{s,s_-}=0$ and $K_{s, s_+}=1$ and so
\[
\hat K_{s, s'} = K_{s_+,s_-}.
\]
When $s'=s_0$, $m_{s, s_+}=3$ and $m_{s, s_-}\geq 3$, we have
\[
\hat K_{s, s'} = K_{s, s_-} +K_{s, s_+} K_{s_+,s_-} =
K_{s, s_-} + K_{s_+,s_-} \geq 2. 
\]
When $s'=s_0$ and $m_{s, s_+}\geq 4$, we have
\[
\hat K_{s, s'} = K_{s, s_-} + K_{s, s_+} K_{s_+,s_-}\geq
K_{s, s_+} K_{s_+,s_-} \geq 4(\cos \frac{\pi}{4} )^2= 2. 
\]
With the above information, one can verify readily that $\hat K$ satisfies (\ref{K}) for $\tilde N$. 
As such, this defines an embedding $\tilde \sigma: W_{S/e,\tilde N} \to \GL(V_{S/e})$ using the techniques in Section~\ref{linear}. 

We define an injective linear map
\[
\iota: V_{S/e} \to V_{S}, \alpha_s\mapsto \alpha_s, \forall s\neq s_0, \alpha_{s_0} \mapsto \alpha_{s_-}+ K_{s_+,s_-}\alpha_{s_+}=\sigma_{s_+}(\alpha_{s_-}).
\]
We identify $V_{S/e}$ with its image under $\iota$.
Then one can check that the elements $\sigma_s$, for all $s\neq s_{\pm}$ and $\sigma_{s_+} \sigma_{s_-}\sigma_{s_+}$ leave
$V_{S/e}$ stable. 

Indeed, we have for all $s\in S$,
\begin{align}
\label{comp-a}
\begin{split}
\sigma_{s_+}\sigma_{s_-}\sigma_{s_+} (\alpha_s)
& = \sigma_{s_+}\sigma_{s_-} (\alpha_s + K_{s_+,s}\alpha_{s_+})\\
& = \sigma_{s_+} ( \alpha_s + K_{s_-,s} \alpha_{s_-} + K_{s_+,s} (\alpha_{s_+} + K_{s_-, s_+} \alpha_{s_-}))\\
& =\alpha_s + K_{s_+, s} \alpha_{s_+} + K_{s_-, s} ( \alpha_{s_-}+K_{s_+ , s_-} \alpha_{s_+}) +\\
& \hspace{1.5cm} K_{s_+,s} ( - \alpha_{s_+} + K_{s_-, s_+} (\alpha_{s_-} +K_{s_+, s_-} \alpha_{s_+}))\\
& =\alpha_s + \hat K_{s_0, s} \sigma_{s_+}(\alpha_{s_-})\\
&= \alpha_s + \hat K_{s_0,s} \iota(\alpha_{s_0}).
\end{split}
\end{align}
And we have for all $s\neq s_{\pm}$, 
\begin{align}
\label{comp-b}
\begin{split}
\sigma_s (\iota(\alpha_{s_0})) & = \sigma_s (\alpha_{s_-}+K_{s_+, s_-} \alpha_{s_+})\\
&= \alpha_{s_-} + K_{s, s_-}\alpha_s + K_{s_+,s_-} (\alpha_{s_+} + K_{s, s_+} \alpha_s) \\
&=\iota(\alpha_{s_0}) + \hat K_{s,s_0} \alpha_s. 
\end{split}
\end{align}

Moreover, by (\ref{comp-a}) and (\ref{comp-b}), we know that when restrict to
the subspace $V_{S/e}$, the operators $\sigma_s$, $s\in S/e-\{s_0\}$ and $\sigma_{s_+}\sigma_{s_-}\sigma_{s_+}$ coincide
respectively with the operators $\tilde \sigma_s$, $s\in S/e-\{s_0\}$ and $\tilde \sigma_{s_0}$ on $V_{S/e}$.
Now the remaining arguments in the proof of Proposition~\ref{W-edge} apply here to infer that $\tilde \phi$ is injective.
The proposition is thus proved. 
\end{proof}

Let $H_{S/e, \tilde N}$ be the Hecke algebra associated with the Coxeter system $(W_{S/e, \tilde N}, S/e)$. We have
the following counterpart of Theorem~\ref{H-inj}.

\begin{prop}
\label{phi-H-ext}
There is an algebra embedding $\tilde \phi_v: H_{S/e, \tilde N} \to H_{S}$ defined by
\begin{align}
T'_s \mapsto 
\begin{cases}
T_s &\mbox{if}\ s\in S/e-\{s_0\},\\
T_{s_+} T_{s_-} T^{-1}_{s_+} &\mbox{if} \ s=s_0.
\end{cases}
\end{align}
\end{prop}

\begin{proof}
One needs to show that $\tilde \phi_v$ is well-defined. These can be checked by definition. 
It is easy to see that we only need to verify the braid relations involving the generators $T'_{s_0}$. 
The case when $s'=s_0$ and $m_{s_+, s} =2$ follows readily due to the commutator relation
$T_{s} T_{s_+}=T_{s_+} T_s$. 
The remaining case is 
 $s'=s_0$, $m_{s, s_+}=3$, $m_{s, s_-}=2$ and $m_{s_+, s_-}<\infty$. In this case, we have
\[
T^{-1}_{s_+} T_s T_{s_+} = T_s T_{s_+} T^{-1}_s\ \mbox{and}\ 
T^{-1}_s T^{-1}_{s_+} T_s = T_{s_+} T^{-1}_s T^{-1}_{s_+}.
\]
So if $m_{s_+, s_-}$ is even, we have 
\begin{align}
\label{H-a}
\begin{split}
& T_s (T_{s_+} T_{s_-} T^{-1}_{s_+}) T_s ( T_{s_+} T_{s_-} T^{-1}_{s_+} ) \cdots T_s (T_{s_+} T_{s_-} T^{-1}_{s_+}) \\
&= T_{s_+} ( T^{-1}_{s_+} T_s T_{s_+}) T_{s_-} ( T^{-1}_{s_+} T_s T_{s_+}) T_{s_-} \cdots (T^{-1}_{s_+} T_s T_{s_+} ) T_{s_-} T^{-1}_{s_+} \\
& = T_{s_+} ( T_s T_{s_+} T^{-1}_s) T_{s_-}  ( T_s T_{s_+} T^{-1}_s) \cdots (T_s T_{s_+} T^{-1}_s) T_{s_-} T^{-1}_{s_+}\\
&= T_{s_+} T_s ( T_{s_+} T_{s_-} \cdots T_{s_+} T_{s_-}) T^{-1}_s T^{-1}_{s_+},
\end{split}
\end{align}
where the monomial at the beginning has $m_{s_+, s_-}$ terms with $(T_{s_+} T_{s_-} T^{-1}_{s_+})$ counted as one term, and as such
there are $m_{s_+, s_-}$ terms in the parenthesis in the end.
If $m_{s_+, s_-}$ is odd, then 
\begin{align}
\label{H-b}
\begin{split}
 &T_s (T_{s_+} T_{s_-} T^{-1}_{s_+}) T_s ( T_{s_+} T_{s_-} T^{-1}_{s_+} ) \cdots ( T_{s_+} T_{s_-} T^{-1}_{s_+} ) T_s\\
 & = T_{s_+} (T^{-1}_{s_+} T_s T_{s_+}) T_{s_-}  (T^{-1}_{s_+} T_s T_{s_+}) T_{s_-}   \cdots (T^{-1}_{s_+} T_s T_{s_+}) T_{s_-}  
 T^{-1}_{s_+} T_s\\
 & =T_{s_+} ( T_s T_{s_+} T^{-1}_s) T_{s_-}   ( T_s T_{s_+} T^{-1}_s) T_{s_-} \cdots ( T_s T_{s_+} T^{-1}_s) T_{s_-}  T^{-1}_{s_+} T_s\\
 &= T_{s_+} T_s ( T_{s_+} T_{s_-} T_{s_+} T_{s_-} \cdots T_{s_+} T_{s_-}) T^{-1}_s T^{-1}_{s_+} T_s\\
 &= T_{s_+} T_s ( T_{s_+} T_{s_-} T_{s_+} T_{s_-} \cdots T_{s_+} T_{s_-})  T_{s_+} T^{-1}_s T^{-1}_{s_+}\\
 &=  T_{s_+} T_s ( T_{s_+} T_{s_-} \cdots T_{s_+} T_{s_-}T_{s_+}) T^{-1}_s T^{-1}_{s_+}.
\end{split}
\end{align}
Similarly, one can check that we have
\begin{align}
\label{H-c}
\begin{split}
(T_{s_+} T_{s_-} T^{-1}_{s_+}) T_s (T_{s_+} T_{s_-} T^{-1}_{s_+}) T_s \cdots 
& = T_{s_+} T_s ( T_{s_-} T_{s_+} T_{s_-} T_{s_+} \cdots) T^{-1}_s T^{-1}_{s_+} ,
\end{split}
\end{align}
where the left hand side has $m_{s_+, s_-}$ terms and hence there are $m_{s_+,s_-}$ terms in the parenthesis of the right hand side. 
So (\ref{H-c}) is equal to (\ref{H-a}) if $m_{s_+, s_-}$ is even and (\ref{H-b}) if $m_{s_+, s_-}$ is odd.
Thus we see that $T_s$ and $T_{s_+} T_{s_-}T_{s_+}$ subject to the braid relation of 
$T'_s$ and $T'_{s_0}$ in this case.  Therefore the morphism $\tilde \phi_v$ is well-defined.

The injectivity of $\tilde \phi_v$  can be proved by the same argument
in the proof of Theorem~\ref{H-inj}. This finishes the proof. 
\end{proof}

We end the appendix with the following question, generalizing Proposition~\ref{phi-H-ext}. 
Let $W'$ be a reflection subgroup of $W_S$. 
By~\cite{Dy87} and~\cite{De89}, there is a set $S'$ of reflections in $W_S$ such that 
the pair $(W', S')$ is a Coxeter system.
Assume that  $S' =\{ w_i s_i w^{-1}_i| s_i \in S, w_i\in W\}$. 
Let $H_{W', S'}$ be the Hecke algebra associated with the Coxeter system with generators written as
$T'_{s'}$ for $s'\in S'$. 

\begin{Question}
Does the assignment $T'_{w_is_iw^{-1}_i} \mapsto T_{w_i} T_{s_i} T^{-1}_{w_i}$ for all $w_i s_i w^{-1}_i \in S'$ define an algebra embedding
$H_{W', S'} \to H_{S}$? 
\end{Question}

In other words, the question asks whether the subalgebra of $H_S$ generated 
by the elements $T_{w_i} T_{s_i} T^{-1}_{w_i}$ for all $w_i s_i w^{-1}_i\in S'$ 
is a Hecke algebra and, if so, the Hecke algebra associated with $(W', S')$.



{\bf Declaration of generative AI and AI-assisted technologies in the writing process}

During the preparation of this work the author(s) used ChatGPT in order to tweak the Introduction. After using this tool/service, the author(s) reviewed and edited the content as needed and take(s) full responsibility for the content of the publication.

\end{document}